\newtheorem{theorem}{Theorem}[section]
\newtheorem{lem}[theorem]{Lemma} 
\newtheorem{prop}[theorem]{Proposition}
\newtheorem{coro}[theorem]{Corollary} 
\theoremstyle{definition}
\newtheorem{rem}[theorem]{Remark}
\newcommand{\N}{\mathbb N}
\newcommand{\R}{\mathbb R}
\newcommand{\C}{\mathbb C}
\newcommand{\Sh}{\mathscr{S}}
\newcommand{\Ph}{\mathscr{P}}
\newcommand{\ep}{\epsilon}
\newcommand{\Gact} {\gamma_{\text{c}}}
\newcommand{\Gace} {\gamma_{\emph{c}}}
\newcommand{\scal}[1]{\left\langle #1 \right\rangle} 
\newcommand{\name}{$\underline{\qquad \qquad}$} 
\newcommand{\defendproof}{\hfill $\Box$} 
\begin{document}
\title{\sc On well-posedness, regularity and ill-posedness for the nonlinear fourth-order Schr\"odinger equation}
\author{\sc{Van Duong Dinh}} 
\date{ }
\maketitle

\begin{abstract}
We prove the local well-posedness for the nonlinear fourth-order Schr\"odinger equation (NL4S) in Sobolev spaces. We also studied the regularity of solutions in the sub-critical case. A direct consequence of this regularity is the global well-posedness above mass and energy spaces under some assumptions. Finally, we show the ill-posedness for (NL4S) in some cases of the super-critical range. 
\end{abstract}


\section{Introduction and main results}
\setcounter{equation}{0}
We consider the Cauchy fourth-order Schr\"odinger equation posed on $\R^d, d\geq 1$, namely
\begin{align}
\left\{
\begin{array}{rl}
i\partial_t u(t,x) + \Delta^2 u(t,x)&=-\mu |u|^{\nu-1} u(t,x), \quad (t, x) \in \R \times \R^d, \\
u(0,x) &= u_0(x), \quad x\in \R^d.
\end{array}
\right.
\tag{NL4S}
\end{align}
where $\nu>1$ and $\mu\in \{\pm 1\}$. The number $\mu=1$ (resp. $\mu=-1$) corresponds to the defocusing case (resp. focusing case). \newline
\indent The fourth-order Schr\"odinger equation was introduced by Karpman \cite{Karpman} and Karpman and Shagalov \cite{KarpmanShagalov} concerning the role of small fourth-order dispersion terms in the propagation of intense laser beams in a bulk medium with Kerr nonlinearity. The study of nonlinear fourth-order Schr\"odinger equation has been attracted a lot of interest in a past decay (see \cite{Pausader}, \cite{Pausadercubic}, \cite{HaoHsiaoWang06}, \cite{HaoHsiaoWang07}, \cite{HuoJia} and references cited therein).  \newline
\indent It is worth noticing that if we set for $\lambda>0$, 
\begin{align}
u_\lambda(t,x)= \lambda^{-\frac{4}{\nu-1}} u( \lambda^{-4} t, \lambda^{-1} x), \label{scaling}
\end{align}
then the (NL4S) is invariant under this scaling. 
An easy computation shows 
\[
\|u_\lambda(0)\|_{\dot{H}^\gamma} = \lambda^{\frac{d}{2}-\frac{4}{\nu-1}- \gamma} \|\varphi\|_{\dot{H}^\gamma}. \nonumber
\]
From this, we define the critical regularity exponent for (NL4S) by
\begin{align}
\Gact =\frac{d}{2} - \frac{4}{\nu-1}. \label{critical exponent 4 schrodinger}
\end{align}
One said that $H^\gamma$ is sub-critical (critical, super-critical) if $\gamma>\Gact$ ($\gamma=\Gact$, $\gamma<\Gact$) respectively. Another important property of (NL4S) is that the following mass and energy are formally conserved under the flow of the equation,
\[
M(u(t))= \int |u(t,x)|^2 dx, \quad E(u(t))= \int \frac{1}{2}|\Delta u(t,x)|^2 + \frac{\mu}{\nu+1}|u(t,x)|^{\nu+1} dx.
\] 
\indent The main purpose of this note is to give the well-posedness and ill-posedness results for (NL4S) in Sobolev spaces. In \cite{Dinh}, the local well-posedness for the nonlinear fractional Schr\"odinger equation including the fourth-order Schr\"odinger equation in both sub-critical and critical cases are showed. We shall review the local well-posedness for the nonlinear fourth-order Schr\"odinger equation below. These results are very similar to the nonlinear Schr\"odinger equation given in \cite{CazenaveWeissler}. We also give the local well-posedness in the critical Sobolev space $H^{d/2}$. The global well-posedness in $L^2$ is then a direct consequence of the local existence and the conservation of mass. We also recall (see e.g. \cite{Pausader} or \cite{Dinh}) the global well-posedness in the energy space $H^2$ under some assumptions. We next show the regularity of solutions in the sub-critical case. As a consequence of this regularity, we obtain the global well-posedness above the mass and energy spaces for (NL4S) under some assumptions. The second part of this note is devoted to the ill-posedness of (NL4S). It is easy to see (e.g \cite{LinSog}) that the (NL4S) is ill-posed in $\dot{H}^\gamma$ for $\gamma<\Gact$. Indeed if $u$ solves the (NL4S) with initial data $\varphi \in \dot{H}^\gamma$ with the lifespan $T$, then the norm $\|u_\lambda(0)\|_{\dot{H}^\gamma}$ and the lifespan of $u_\lambda$ go to zero as $\lambda \rightarrow 0$. Using the technique of Christ-Colliander-Tao given in \cite{ChristCollianderTao}, we are able to prove the ill-posedness for (NL4S) in some cases of the super-critical range, precisely in $H^\gamma$ with $\gamma \in ((-\infty,-d/2] \cap (-\infty, \Gact)) \cup [0,\Gact)$. This ill-posed result is similar to the nonlinear semi-relativistic equation given in \cite{Dinhhalfwave}. Note that for the nonlinear Schr\"odinger equation, the ill-posedness holds in $H^\gamma$ for $\gamma <\max\{0, \Gact\}$ (see \cite{ChristCollianderTao}). The main difference is that the nonlinear Schr\"odinger equation has the Galilean invariance while (NL4S) does not share this property. The Galilean invariance plays a significant role in the proof of the ill-posedness in the range $\gamma\in (-d/2,0)$. Recently, Hong and Sire in \cite{HongSire} used the pseudo-Galilean transformation to get the ill-posedness for the nonlinear fractional Schr\"odinger equation in Sobolev spaces of negative exponent. Unfortunately, it seems to be difficult to control the error of the pseudo-Galilean transformation in high Sobolev norms and so far restricted in one dimension. We finally note that the well-posedness, regularity for the (NL4S) given in this note can be applied for the nonlinear fractional Schr\"odinger equation of order greater than or equal to 2 without any difficulty. Moreover, the ill-posedness argument can be adapted for the nonlinear fractional Schr\"odinger equation of any order. \newline
\indent Before stating our results, let us introduce some notations (see the appendix of \cite{GinibreVelo85}, Chapter 5 of \cite{Triebel} or Chapter 6 of \cite{BerghLosfstom}). Given $\gamma \in \R$ and $1 \leq q \leq \infty$, the generalized Sobolev space is defined by
\[
H^\gamma_q := \Big\{ u \in \Sh' \ | \  \|u\|_{H^\gamma_q}:=\|\scal{\Lambda}^\gamma u\|_{L^q} <\infty \Big\}, \quad \Lambda=\sqrt{-\Delta},
\]
where $\scal{\cdot}$ is the Japanese bracket and $\Sh'$ the space of tempered distributions. The generalized homogeneous Sobolev space is defined by
\[
\dot{H}^\gamma_q := \Big\{ u \in \Sh'_0 \ | \  \|u\|_{\dot{H}^\gamma_q}:=\|\Lambda^\gamma u\|_{L^q} <\infty \Big\},
\]
where $\Sh_0$ is a subspace of the Schwartz space $\Sh$ consisting of functions $\phi$ satisfying $D^\alpha \hat{\phi}(0)=0$ for all $\alpha \in \N^d$ where $\hat{\cdot}$ is the Fourier transform on $\Sh$ and $\Sh'_0$ its topological dual space. One can see $\Sh'_0$ as $\Sh'/\Ph$ where $\Ph$ is the set of all polynomials on $\R^d$. Under these settings, $H^\gamma_q$ and $\dot{H}^\gamma_q$ are Banach spaces with the norms $\|u\|_{H^\gamma_q}$ and $\|u\|_{\dot{H}^\gamma_q}$ respectively. In the sequel, we shall use $H^\gamma:= H^\gamma_2$, $\dot{H}^\gamma:= \dot{H}^\gamma_2$. We also have for $\gamma>0$, $H^\gamma_q = L^q \cap \dot{H}^\gamma_q$. \newline 
\indent Throughout this note, a pair $(p,q)$ is said to be admissible if
\[
(p,q)\in [2,\infty]^2, \quad (p,q,d)\ne (2,\infty,2), \quad \frac{2}{p}+\frac{d}{q}\leq \frac{d}{2}.
\]
We also denote for $(p,q) \in [1,\infty]^2$,
\begin{align}
\gamma_{p,q}=\frac{d}{2}-\frac{d}{q}-\frac{4}{p}. \label{define gamma pq}
\end{align}
Since we are working in spaces of fractional order $\gamma$ or $\beta$, we  need the nonlinearity $F(z)=-\mu |z|^{\nu-1} z$ to have enough regularity. When $\nu$ is an odd integer, $F \in C^\infty(\C, \C)$ (in the real sense). When $\nu$ is not an odd integer, we need the following assumption
\begin{align}
\lceil \gamma \rceil \text{ or } \lceil \beta \rceil \leq \nu, \label{assumption smoothness nonlinearity}
\end{align}
where $\lceil \gamma\rceil$ is the smallest integer greater than or equal to $\gamma$, similarly for $\beta$. Our first result concerns the local well-posedness of (NL4S) in both sub-critical and critical cases.
\begin{theorem} \label{theorem below d/2}
Let $\gamma \in [0,d/2)$ be such that $\gamma \geq \Gace$, and also, if $\nu>1$ is not an odd integer, $(\ref{assumption smoothness nonlinearity})$. 
Let
\begin{align}
p=\frac{8(\nu+1)}{(\nu-1)(d-2\gamma)}, \quad q=\frac{d(\nu+1)}{d+(\nu-1)\gamma}. \label{define pq}
\end{align}
Then for all $u_0 \in H^\gamma$, there exist $T^* \in (0,\infty]$ and a unique solution to \emph{(NL4S)} satisfying
\[
u \in C([0,T^*), H^\gamma) \cap L^p_{\emph{loc}}([0,T^*), H^\gamma_q).
\]
Moreover, the following properties hold:
\begin{itemize}
\item[\emph{(i)}] $u \in L^a_{\emph{loc}}([0,T^*), H^\gamma_b)$ for any admissible pair $(a,b)$ with $b<\infty$ and $\gamma_{a,b}=0$. 
\item[\emph{(ii)}] $M(u(t))=M(u_0)$ for any $t\in [0,T^*)$.
\item[\emph{(iii)}] If $\gamma\geq 2$, $E(u(t))=E(u_0)$ for any $t\in [0,T^*)$.
\item[\emph{(iv)}] If $\gamma >\Gace$ and $T^*<\infty$, then $\|u(t)\|_{\dot{H}^\gamma} \rightarrow \infty$ as $t\rightarrow T^*$.
\item[\emph{(v)}] If $\gamma=\Gace$ and $T^*<\infty$, then $\|u\|_{L^p([0,T^*), H^{\Gace}_q)}=\infty$.
\item[\emph{(vi)}] $u$ depends continuously on $u_0$ in the following sense. There exists $0< T< T^*$ such that if $u_{0,n} \rightarrow u_0$ in $H^\gamma$ and if $u_n$ denotes the solution of \emph{(NL4S)} with initial data $u_{0,n}$, then $0<T< T^*(u_{0,n})$ for all $n$ sufficiently large and $u_n$ is bounded in $L^a([0,T],H^\gamma_b)$ for any admissible pair $(a,b)$ with $\gamma_{a,b}=0$ and $b<\infty$. Moreover, $u_n \rightarrow u$ in $L^a([0,T],L^b)$ as $n \rightarrow \infty$. In particular, $u_n \rightarrow u$ in $C([0,T],H^{\gamma-\ep})$ for all $\ep>0$.
\item[\emph{(vii)}] If $\gamma=\Gace$ and $\|u_0\|_{\dot{H}^{\Gace}}<\varepsilon$ for some $\varepsilon>0$ small enough, then $T^*=\infty$ and the solution is scattering in $H^{\Gace}$, i.e. there exists $u_0^+ \in H^{\Gace}$ such that
\[
\lim_{t\rightarrow+\infty} \|u(t)-e^{it\Delta^2} u_0^+\|_{H^{\Gace}} =0. 
\] 
\end{itemize}
\end{theorem}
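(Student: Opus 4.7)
The plan is the classical Cazenave--Weissler contraction scheme in Strichartz spaces, adapted to the biharmonic propagator $e^{it\Delta^2}$. Writing the equation in Duhamel form
\[
\Phi(u)(t) = e^{it\Delta^2} u_0 - i\mu \int_0^t e^{i(t-s)\Delta^2} |u|^{\nu-1} u(s)\, ds,
\]
I would seek a fixed point in a closed ball of $C([0,T],H^\gamma) \cap L^p([0,T],H^\gamma_q)$, metrized by the weaker norm $L^p([0,T],L^q)$ on the ball so that the difference $\Phi(u)-\Phi(v)$ can be controlled without having to move fractional derivatives through the nonlinearity. A direct calculation shows that the pair $(p,q)$ in (\ref{define pq}) is admissible and satisfies $\gamma_{p,q}=0$, which is exactly what makes $H^\gamma_q$ the natural Strichartz space at regularity $\gamma$. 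The two analytic ingredients are the Strichartz estimates for $e^{it\Delta^2}$ and the fractional chain rule for $|z|^{\nu-1}z$, both available from \cite{Dinh}; the smoothness hypothesis (\ref{assumption smoothness nonlinearity}) is invoked precisely to legitimize the chain rule when $\nu$ is not an odd integer.

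For existence I would combine H\"older in time, the fractional chain rule, and Sobolev embedding to estimate $\||u|^{\nu-1}u\|_{L^{p'}_t H^\gamma_{q'}}$ (or on a more convenient dual Strichartz pair produced by $\gamma_{p,q}=0$). In the sub-critical case $\gamma>\Gact$ this gains a prefactor $T^\theta$ with $\theta>0$, forcing the map to be a contraction for small $T$. In the critical case $\gamma=\Gact$ there is no such gain; instead one exploits the smallness of $\|e^{it\Delta^2}u_0\|_{L^p([0,T],H^\Gact_q)}$ on short intervals, and, when $\|u_0\|_{\dot{H}^\Gact}$ is itself small, on all of $[0,\infty)$---this is what drives the scattering statement (vii). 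Uniqueness in the class $L^p_{\mathrm{loc}}H^\gamma_q$ follows from the same difference estimate on a small enough subinterval, and item (i) is obtained by reinserting the solution into the Duhamel formula and reapplying the inhomogeneous Strichartz inequality with a second admissible pair $(a,b)$ such that $\gamma_{a,b}=0$ and $b<\infty$.

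The conservation laws (ii) and (iii) follow by approximating $u_0$ by smooth data, for which $M$ and $E$ are differentiable in time along the flow, and then passing to the limit through the continuous dependence in (vi). The blow-up alternatives (iv) and (v) are proved by contradiction: if $\|u(t)\|_{\dot{H}^\gamma}$, respectively $\|u\|_{L^p([0,T^*),H^\Gact_q)}$, remained finite, the local theory could be restarted near $T^*$ with a uniform existence time and the solution extended past $T^*$. For (vi), running the fixed-point argument with data $u_{0,n}$ on a uniform time interval gives uniform Strichartz bounds on $u_n$; the difference estimates in $L^a L^b$ yield the stated convergences, and $C([0,T],H^{\gamma-\varepsilon})$ convergence follows by interpolating with the uniform $H^\gamma$ bound. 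For (vii), global finiteness of the Strichartz norm renders $e^{-it\Delta^2}u(t)$ Cauchy in $H^\Gact$ as $t\to\infty$, producing the asymptotic state $u_0^+$. The main technical obstacle throughout is the fractional chain rule for $|z|^{\nu-1}z$ when $\nu$ is not an odd integer: the hypothesis $\lceil\gamma\rceil\le\nu$ is exactly what is needed to absorb the limited H\"older regularity of this nonlinearity when differentiating it fractionally to order $\gamma$, and it is this constraint (rather than the linear Strichartz theory) that dictates the range of parameters in which the argument closes.
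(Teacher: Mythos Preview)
Your proposal is correct and follows essentially the same route as the paper: a Cazenave--Weissler contraction in the ball of $L^p(I,H^\gamma_q)$ (equivalently $C(I,H^\gamma)\cap L^p(I,H^\gamma_q)$) equipped with the weaker $L^p(I,L^q)$ distance, using the admissible pair $(p,q)$ with $\gamma_{p,q}=0$, Strichartz estimates for $e^{it\Delta^2}$, and the fractional chain rule (Lemma~\ref{lem nonlinear estimates}) to obtain the gain $T^\theta$ in the sub-critical case and smallness of the linear evolution in the critical case. Your treatment of (i)--(vii) matches the paper's step-by-step argument; the only cosmetic difference is that the paper defines the ball via the homogeneous norm $\|u\|_{L^p(I,\dot H^\gamma_q)}\le M$ alone and recovers $C(I,H^\gamma)$ a posteriori.
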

We also have the following local well-posedness in the critical Sobolev space $H^{d/2}$.
\begin{theorem} \label{theorem d/2}
Let $\gamma=d/2$ be such that if $\nu>1$ is not an odd integer, $(\ref{assumption smoothness nonlinearity})$. Then for all $u_0 \in H^{d/2}$, there exists $T^*\in (0,\infty]$ and a unique solution to \emph{(NL4S)} satisfying
\[
u \in C([0,T^*),H^{d/2}) \cap L^p_{\emph{loc}}([0,T^*),L^\infty),
\]
for some $p>\max(\nu-1,4)$ when $d=1$ and some $p>\max(\nu-1,2)$ when $d\geq 2$. Moreover, the following properties hold:
\begin{itemize}
\item[\emph{(i)}]  $u \in L^a_{\emph{loc}}([0,T^*),H^{d/2}_b)$ for any admissible pair $(a,b)$ with $b<\infty$ and $\gamma_{a,b}=0$.
\item[\emph{(ii)}] If $T^*<\infty$, then $\|u(t)\|_{H^{d/2}} \rightarrow \infty$ as $t \rightarrow T^*$.
\item[\emph{(iii)}] $u$ depends continuously on $u_0$ in the sense of \emph{Theorem $\ref{theorem below d/2}$}
\end{itemize}
\end{theorem}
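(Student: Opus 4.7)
The plan is to adapt the contraction mapping scheme of Theorem \ref{theorem below d/2} to the endpoint $\gamma=d/2$, where the Sobolev embedding into $L^\infty$ just fails. I would look for a fixed point of the Duhamel map
\[
\Phi(u)(t)=e^{it\Delta^2}u_0-i\mu\int_0^t e^{i(t-s)\Delta^2}|u|^{\nu-1}u(s)\,ds
\]
on a complete metric space of the form
\[
X_{T,M}=\Bigl\{u\in L^\infty([0,T],H^{d/2})\cap L^p([0,T],L^\infty)\cap L^a([0,T],H^{d/2}_b):\|u\|_{X_{T,M}}\leq M\Bigr\},
\]
where $(a,b)$ is an admissible pair with $\gamma_{a,b}=0$ and $2<b<\infty$, so that the Sobolev embedding $H^{d/2}_b\hookrightarrow L^\infty$ is available. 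As in the sub-critical argument, the metric on the ball would be taken without the top-order $H^{d/2}$ derivatives (for instance at the level of $L^a L^b\cap L^p L^\infty$) so as to guarantee completeness.

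Two ingredients then close the fixed-point estimate. First, the biharmonic Strichartz estimates yield $\|e^{it\Delta^2}u_0\|_{L^\infty H^{d/2}\cap L^a H^{d/2}_b}\lesssim\|u_0\|_{H^{d/2}}$, and composed with $H^{d/2}_b\hookrightarrow L^\infty$ they also control the auxiliary $L^p L^\infty$ norm of the free evolution. The lower bounds $p>4$ when $d=1$ and $p>2$ when $d\geq 2$ are exactly what is needed for the existence of an admissible pair playing the role of this auxiliary $L^p L^\infty$ Strichartz space. Second, under the smoothness hypothesis $(\ref{assumption smoothness nonlinearity})$ the fractional chain rule provides
\[
\|\Lambda^{d/2}(|u|^{\nu-1}u)\|_{L^{b'}_x}\lesssim\|u\|_{L^\infty_x}^{\nu-1}\|\Lambda^{d/2}u\|_{L^{b'}_x},
\]
so that Hölder in time combined with the inhomogeneous Strichartz estimate produces a factor $T^\theta$ with $\theta>0$ on the nonlinear term; the role of the condition $p>\nu-1$ is precisely to make $\|u\|_{L^p L^\infty}^{\nu-1}$ integrate against a positive power of $T$. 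Choosing $M$ of the order of $\|u_0\|_{H^{d/2}}$ and $T$ small enough, $\Phi$ then maps $X_{T,M}$ into itself and is contractive in the weaker metric.

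Uniqueness follows from the same contraction estimate applied to two solutions with the same data. Property (i) (the full Strichartz regularity) is recovered a posteriori by reinserting the fixed point into the Strichartz estimate for an arbitrary admissible $(a',b')$ with $\gamma_{a',b'}=0$ and $b'<\infty$. The blow-up alternative (ii) is the standard extension argument: if $\|u(t)\|_{H^{d/2}}$ remained bounded on a maximal interval $[0,T^*)$ with $T^*<\infty$, the local-existence time from the fixed-point step could be uniformly lower-bounded along a sequence $t_k\to T^*$, contradicting maximality. Continuous dependence (iii) is obtained by applying the same nonlinear estimate to the difference $u_n-u$ in the weaker metric and then interpolating with the uniform $L^\infty H^{d/2}$ bound to upgrade convergence to $C([0,T],H^{d/2-\varepsilon})$.

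The principal technical obstacle is the endpoint failure $H^{d/2}\not\hookrightarrow L^\infty$ at this critical exponent. Without it, the sub-critical argument of Theorem \ref{theorem below d/2} would close directly in $L^\infty H^{d/2}$; here one is forced to enlarge the resolution space by the auxiliary $L^p L^\infty$ norm, and to reconcile the admissibility condition for the biharmonic Strichartz estimate with the Hölder-in-time requirement $p>\nu-1$ coming from the nonlinear term. The ranges of $p$ in the statement are exactly what this reconciliation produces.
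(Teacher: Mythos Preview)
Your overall strategy is the paper's: run the contraction in a space that carries, besides $L^\infty_t H^{d/2}_x$, an auxiliary Strichartz norm whose Sobolev embedding recovers the missing $L^\infty_x$ control, and exploit $p>\nu-1$ to extract a positive power of $T$ from the nonlinearity. The remaining steps (uniqueness, item (i), blow-up alternative, continuous dependence) are handled exactly as you describe.

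There is, however, a concrete gap in your nonlinear estimate. You propose to place $\Lambda^{d/2}F(u)$ in $L^{a'}_tL^{b'}_x$ via
\[
\|\Lambda^{d/2}(|u|^{\nu-1}u)\|_{L^{b'}_x}\lesssim\|u\|_{L^\infty_x}^{\nu-1}\|\Lambda^{d/2}u\|_{L^{b'}_x}.
\]
The inequality is true, but the right-hand side is not controlled by your ball $X_{T,M}$: since $b>2$ you have $b'<2$, while $X_{T,M}$ only bounds $\|\Lambda^{d/2}u\|_{L^2_x}$ (from $L^\infty H^{d/2}$) and $\|\Lambda^{d/2}u\|_{L^b_x}$ (from $L^aH^{d/2}_b$), and on $\R^d$ neither embeds into $L^{b'}$. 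So the loop does not close at the dual pair $(a',b')$. The paper avoids this by placing the nonlinearity in the \emph{energy dual} $L^1_tH^{d/2}_x$ instead, using the local Strichartz estimate (Corollary~\ref{coro local strichartz}):
\[
\|\Phi(u)\|_{L^\infty(I,H^{d/2})}+\|\Phi(u)\|_{L^p(I,H^{d/2-\gamma_{p,q}}_q)}\lesssim\|u_0\|_{H^{d/2}}+\|F(u)\|_{L^1(I,H^{d/2})},
\]
together with
\[
\|F(u)\|_{L^1(I,H^{d/2})}\lesssim T^{1-\frac{\nu-1}{p}}\|u\|_{L^p(I,L^\infty)}^{\nu-1}\|u\|_{L^\infty(I,H^{d/2})}.
\]
This choice ($e'=2$, $c'=1$) makes the $\Lambda^{d/2}u$ factor land in $L^2_x$, which \emph{is} in the resolution space, and the $L^p_tL^\infty_x$ factor is recovered from $H^{d/2-\gamma_{p,q}}_q\hookrightarrow L^\infty$ since $d/2-\gamma_{p,q}=d/q+4/p>d/q$.

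A related point: the paper does \emph{not} restrict the auxiliary pair to $\gamma_{a,b}=0$. It takes any admissible $(p,q)$ with $q<\infty$ and works in $L^p(I,H^{d/2-\gamma_{p,q}}_q)$. The lower bounds on $p$ in the statement then arise directly: $p>\nu-1$ gives $\theta>0$, and admissibility with $q<\infty$ forces $2/p<d/2$, i.e.\ $p>4$ when $d=1$ and $p>2$ when $d\geq 2$. With your constraint $\gamma_{a,b}=0$ the admissibility calculation gives $a>8/d$ (e.g.\ $a>8$ when $d=1$), which does not match the stated thresholds unless you decouple $p$ from $a$; your explanation of where the bounds ``$p>4$'' and ``$p>2$'' come from is therefore not quite right.
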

The continuous dependence can be improved (see Remark $\ref{rem continuity d/2}$) if we assume that $\nu>1$ is an odd integer or $\lceil d/2 \rceil \leq \nu-1$. Concerning the well-posedness of the nonlinear Schr\"odinger equation in this critical space, we refer to \cite{Kato95} and \cite{NakamitsuOzawa}. Note that in \cite{NakamitsuOzawa}, the global well-posedness with small data is proved with exponential-type nonlinearity but not the local well-posedness without size restriction on the initial data. \newline  
\indent It is well-known that (see Chapter 4 of \cite{Cazenave}, \cite{Kato95} or Chapter 3 of \cite{Tao}) that for $\gamma>d/2$, the nonlinear Schr\"odinger equation is locally well-posed provided the nonlinearity has enough regularity. It is not a problem to extend this result for the nonlinear fourth-order Schr\"odinger equation. For the sake of completeness, we state (without proof) the local well-posedness for (NL4S) in this range. 
\begin{theorem} \label{theorem above d/2}
Let $\gamma>d/2$ be such that if $\nu>1$ is not an odd integer, $(\ref{assumption smoothness nonlinearity})$. Then for all $u_0 \in H^\gamma$, there exist $T^*\in (0,\infty]$ and a unique solution $u \in C([0,T^*),H^\gamma)$ to \emph{(NL4S)}. Moreover, the following properties hold:
\begin{itemize}
\item[\emph{(i)}] $u \in L^a_{\emph{loc}}([0,T^*),H^\gamma_b)$ for any admissible pair $(a,b)$ with $b<\infty$ and $\gamma_{a,b}=0$.
\item[\emph{(ii)}] If $T^*<\infty$, then $\|u(t)\|_{H^\gamma} \rightarrow \infty$ and $\limsup \|u(t)\|_{L^\infty} \rightarrow \infty$ as $t\rightarrow T^*$.
\item[\emph{(iii)}] $u$ depends continuously on $u_0$ in the following sense. There exists $0<T<T^*$ such that if $u_{0,n}\rightarrow u_0$ in $H^\gamma$ and if $u_n$ is the solution of \emph{(NL4S)} with the initial data $u_{0,n}$, then $u_n\rightarrow u$ in $C([0,T], H^\gamma)$.  
\end{itemize}
\end{theorem}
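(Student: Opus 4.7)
The plan is to run a standard contraction mapping argument on the Duhamel form
\[
\Phi(u)(t) = e^{it\Delta^2}u_0 - i\mu \int_0^t e^{i(t-s)\Delta^2}\bigl(|u|^{\nu-1}u\bigr)(s)\,ds,
\]
exploiting the Sobolev embedding $H^\gamma \hookrightarrow L^\infty$ valid for $\gamma > d/2$ so that $H^\gamma$ is an algebra under pointwise multiplication. First I would fix an admissible pair $(a,b)$ with $b<\infty$ and $\gamma_{a,b}=0$ and work in the complete metric space
\[
X_{T,R} = \Bigl\{ u \in C([0,T],H^\gamma)\cap L^a([0,T],H^\gamma_b) : \|u\|_{L^\infty_T H^\gamma} + \|u\|_{L^a_T H^\gamma_b} \le R \Bigr\},
\]
equipped with the weaker distance $d(u,v) = \|u-v\|_{L^\infty_T L^2} + \|u-v\|_{L^a_T L^b}$, which avoids the difficulty that $u\mapsto |u|^{\nu-1}u$ is typically only $C^{\lceil\gamma\rceil}$ in the real sense under assumption \eqref{assumption smoothness nonlinearity}.

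The central estimate is the Moser/Kato-Ponce type bound
\[
\bigl\| |u|^{\nu-1}u\bigr\|_{H^\gamma} \lesssim \|u\|_{L^\infty}^{\nu-1}\,\|u\|_{H^\gamma} \lesssim \|u\|_{H^\gamma}^{\nu},
\]
valid for $\gamma>d/2$ under \eqref{assumption smoothness nonlinearity}, together with the corresponding Lipschitz version $\bigl\||u|^{\nu-1}u - |v|^{\nu-1}v\bigr\|_{L^2} \lesssim \bigl(\|u\|_{L^\infty}^{\nu-1}+\|v\|_{L^\infty}^{\nu-1}\bigr)\|u-v\|_{L^2}$. Combining these with the Strichartz estimates for $e^{it\Delta^2}$ (which yield control of $\Phi(u)$ in every admissible space with $\gamma_{a,b}=0$, at the cost of a factor $T^\theta$ for some $\theta>0$ arising from a H\"older estimate in time on the source term), one closes the contraction by choosing $R \sim \|u_0\|_{H^\gamma}$ and $T$ small enough depending on $R$. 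This simultaneously yields uniqueness in $X_{T,R}$ and the persistence statement (i), with unconditional uniqueness following from a Gronwall argument on $d(u,v)$.

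For the blow-up alternative (ii), I would argue that as long as $\|u\|_{L^\infty_t L^\infty} \le K$ stays finite on $[0,T^*)$, the nonlinear estimate becomes linear in $\|u\|_{H^\gamma}$, so Gronwall applied to
\[
\|u(t)\|_{H^\gamma} \le \|u_0\|_{H^\gamma} + C K^{\nu-1}\int_0^t \|u(s)\|_{H^\gamma}\,ds
\]
gives an a priori bound on $\|u(t)\|_{H^\gamma}$; the local existence theorem can then be re-applied at times close to $T^*$ to extend the solution beyond $T^*$, a contradiction. Hence $\|u(t)\|_{H^\gamma}\to\infty$ (and in particular $\limsup\|u(t)\|_{L^\infty}\to\infty$ by the above Gronwall bound).

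Finally, for continuous dependence (iii), given $u_{0,n}\to u_0$ in $H^\gamma$ I would pick $T<T^*(u_0)$ in the contraction argument and verify that the same $T$ works for all $n$ large by uniform boundedness of $\|u_{0,n}\|_{H^\gamma}$. Convergence $u_n\to u$ in $C([0,T],L^2)$ follows immediately from the contraction estimate in the weaker distance $d$; to upgrade this to convergence in $C([0,T],H^\gamma)$ one combines uniform $H^\gamma$-bounds, interpolation with $L^2$-convergence to obtain convergence in $H^{\gamma-\varepsilon}$ for every $\varepsilon>0$, and then passes to the limit in the Duhamel formula — the main technical point being that the map $u\mapsto |u|^{\nu-1}u$ is continuous from $H^\gamma$ to $H^\gamma$ under \eqref{assumption smoothness nonlinearity}, which allows one to close the argument using the Brezis-Cazenave trick (weak convergence in $H^\gamma$ plus convergence of norms). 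The main obstacle throughout is handling the limited smoothness of $F(z)=-\mu|z|^{\nu-1}z$ when $\nu$ is not an odd integer, which is exactly what \eqref{assumption smoothness nonlinearity} is designed to overcome.
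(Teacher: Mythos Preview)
The paper does not actually prove this theorem: it explicitly states it ``without proof,'' remarking that the case $\gamma>d/2$ is well known for NLS (citing Cazenave, Kato, Tao) and extends to (NL4S) without difficulty. Your argument is precisely the standard one those references employ --- contraction in $C_T H^\gamma$ via the algebra property $\|F(u)\|_{H^\gamma}\lesssim\|u\|_{L^\infty}^{\nu-1}\|u\|_{H^\gamma}$ (which is the paper's Lemma~\ref{lem nonlinear estimates}), Strichartz for item (i), and Gronwall on the $L^\infty$ bound for the blow-up alternative (ii) --- so there is nothing to compare against.

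One remark on (iii). You correctly anticipate that Lipschitz dependence in the full $H^\gamma$ norm may fail under the bare hypothesis $\lceil\gamma\rceil\le\nu$ (contrast Remark~\ref{rem continuity d/2}, which needs $\lceil d/2\rceil\le\nu-1$ for the $H^{d/2}$-Lipschitz estimate), and the Brezis--Cazenave route you sketch is a valid workaround. Just be aware that the ``convergence of norms'' step is not free: one needs the continuity of $u\mapsto F(u)$ as a map $H^\gamma\to H^\gamma$ together with dominated convergence in the Duhamel integral, and this should be made explicit rather than absorbed into the phrase ``passes to the limit.'' Alternatively, a Bona--Smith regularization argument avoids this subtlety altogether.
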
 
\begin{coro}\label{coro global L2}
Let $\nu \in (1, 1+ 8/d)$. Then for all $\varphi \in L^2$, there exists a unique global solution to \emph{(NL4S)} satisfying $u \in C(\R, L^2) \cap L^p_{\emph{loc}}(\R, L^q)$, where $(p,q)$ given in $(\ref{define pq})$.
\end{coro}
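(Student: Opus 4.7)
The plan is to obtain this corollary as a direct consequence of Theorem~\ref{theorem below d/2} at the endpoint $\gamma = 0$, combined with conservation of mass. First I will verify the hypotheses. The assumption $\nu < 1 + 8/d$ is equivalent to $d(\nu-1) < 8$, which says that the critical exponent $\Gact = d/2 - 4/(\nu-1)$ is strictly negative, so the choice $\gamma = 0$ satisfies $\gamma > \Gact$ and lies in the strictly subcritical regime of Theorem~\ref{theorem below d/2}. The smoothness assumption (\ref{assumption smoothness nonlinearity}) holds trivially since $\lceil 0 \rceil = 0 \leq \nu$, and the exponents $(p,q)$ appearing in the corollary are exactly those given by (\ref{define pq}) with $\gamma = 0$.

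Invoking Theorem~\ref{theorem below d/2} then furnishes a unique maximal forward solution $u \in C([0, T^*), L^2) \cap L^p_{\text{loc}}([0, T^*), L^q)$. To upgrade this to a global solution I will combine property (ii), namely mass conservation $\|u(t)\|_{L^2} = \|\varphi\|_{L^2}$ for all $t \in [0, T^*)$, with the blow-up alternative (iv): since $\gamma = 0 > \Gact$ strictly, if $T^* < \infty$ then (iv) would force $\|u(t)\|_{\dot{H}^0} = \|u(t)\|_{L^2} \to \infty$ as $t \to T^*$, contradicting the conserved mass. Hence $T^* = +\infty$.

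For negative times I will exploit the invariance of (NL4S) under the conjugate time-reversal $u(t,x) \mapsto \overline{u(-t,x)}$: applying the forward argument just obtained to this reversed Cauchy problem with initial data $\overline{\varphi} \in L^2$ yields a global backward solution that matches $u$ at $t = 0$; the two half-line solutions then glue to give $u \in C(\R, L^2) \cap L^p_{\text{loc}}(\R, L^q)$, with uniqueness inherited from the half-line statements. There is no real obstacle in the argument; the only point requiring (mild) verification is the arithmetic identification of the mass-subcritical range $\nu < 1 + 8/d$ with the strict inequality $\Gact < 0$ needed to invoke (iv), and this is immediate.
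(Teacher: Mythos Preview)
Your argument is correct and matches the paper's approach: the paper does not give an explicit proof of this corollary but states it as ``a direct consequence of the local existence and the conservation of mass,'' which is precisely the mechanism you spell out (Theorem~\ref{theorem below d/2} at $\gamma=0$, mass conservation (ii), and the blow-up alternative (iv)). Your added remarks on the time-reversal symmetry to cover negative times and the verification that $\nu<1+8/d$ forces $\Gact<0$ are appropriate details that the paper leaves implicit.
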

In the energy space $H^2$, we have the following global well-posedness result.
\begin{prop}[\cite{Pausader} or \cite{Dinh}] \label{prop global H2}
Let $\nu\in (1,1+8/(d-4))$ for $d \geq 5$ and $\nu>1$ for $d\leq 4$. Then for any $u_0\in H^2$, the solution to \emph{(NL4S)} given in \emph{Theorem \ref{theorem below d/2}, Theorem $\ref{theorem d/2}$ and Theorem $\ref{theorem above d/2}$} can be extended to the whole $\R$ if one of the following is satisfied:
\begin{itemize}
\item[\emph{(i)}] $\mu=1$.
\item[\emph{(ii)}] $\mu =-1, \nu <1+8/d$.
\item[\emph{(iii)}] $\mu =-1, \nu=1+8/d$ and $\|u_0\|_{L^2}$ is small.
\item[\emph{(iv)}] $\mu=-1$ and $\|u_0\|_{H^{2}}$ is small.
\end{itemize}
\end{prop}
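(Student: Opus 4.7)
The plan is to combine the local well-posedness results (Theorems \ref{theorem below d/2}, \ref{theorem d/2}, \ref{theorem above d/2}) with the conservation laws to obtain an a priori bound on $\|u(t)\|_{H^2}$ on every finite time interval. Since the assumption $\nu<1+8/(d-4)$ (for $d\geq 5$) ensures that $H^2$ is sub-critical or critical for the scaling \eqref{scaling}, the blow-up alternative from the local theory reduces the proposition to showing that $\|u(t)\|_{H^2}$ cannot escape to infinity in finite time. Thanks to mass conservation this amounts to bounding $\|\Delta u(t)\|_{L^2}$.

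In case (i) the energy $E(u(t))=\tfrac{1}{2}\|\Delta u\|_{L^2}^2+\tfrac{1}{\nu+1}\|u\|_{L^{\nu+1}}^{\nu+1}$ is a sum of non-negative terms, so $\|\Delta u(t)\|_{L^2}^2\leq 2E(u_0)$ immediately. In the focusing cases (ii) and (iii) I would control the negative potential term using the Gagliardo--Nirenberg inequality
\[
\|u\|_{L^{\nu+1}}^{\nu+1}\leq C\|u\|_{L^2}^{(\nu+1)(1-\theta)}\|\Delta u\|_{L^2}^{(\nu+1)\theta},\qquad \theta=\frac{d(\nu-1)}{8(\nu+1)},
\]
valid under the Sobolev admissibility $1<\nu\leq 1+8/(d-4)$ (and for arbitrary $\nu>1$ when $d\leq 4$ via the embedding $H^2\hookrightarrow L^{\nu+1}$). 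Inserting this into energy conservation gives
\[
\tfrac{1}{2}\|\Delta u\|_{L^2}^2\leq E(u_0)+\tfrac{C}{\nu+1}\|u_0\|_{L^2}^{(\nu+1)(1-\theta)}\|\Delta u\|_{L^2}^{(\nu+1)\theta}.
\]
In case (ii) one checks $(\nu+1)\theta<2 \iff \nu<1+8/d$, so Young's inequality absorbs the last term into the left-hand side. In case (iii) one has $(\nu+1)\theta=2$, and the coefficient in front of $\|\Delta u\|_{L^2}^2$ is a fixed constant times $\|u_0\|_{L^2}^{8/d}$, which is smaller than $1/2$ when $\|u_0\|_{L^2}$ is sufficiently small; absorption again yields the bound.

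For case (iv) I would run a continuity argument on $f(t):=\|u(t)\|_{H^2}$. Using Sobolev embedding and interpolation with mass conservation, the potential term in the energy is dominated by $C f(t)^{\nu+1}$, so for small initial data the relation $\tfrac12\|\Delta u(t)\|_{L^2}^2\leq E(u_0)+C f(t)^{\nu+1}$ becomes a bootstrap inequality of the form $f(t)^2\leq C_0\|u_0\|_{H^2}^2+C f(t)^{\nu+1}$; standard continuity then forces $f(t)\leq 2\sqrt{C_0}\|u_0\|_{H^2}$ as long as the solution exists, giving the global bound. The main technical point is the verification of the Gagliardo--Nirenberg exponent $\theta$ under the hypothesis on $\nu$, and (in case (iii)) a careful computation of the smallness threshold; once these are in place the argument is purely an application of the blow-up alternative.
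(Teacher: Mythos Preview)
The paper does not actually prove this proposition; it is stated with attribution to \cite{Pausader} and \cite{Dinh} and no argument is given. Your sketch is exactly the standard proof one finds in those references: combine the blow-up alternative from the local theory with mass and energy conservation, and control the focusing potential term via the Gagliardo--Nirenberg inequality (or, in case (iv), a bootstrap). So in substance your approach coincides with what the paper is invoking.

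One minor correction: the interpolation exponent in your Gagliardo--Nirenberg inequality should be
\[
\theta=\frac{d(\nu-1)}{4(\nu+1)},
\]
not $d(\nu-1)/8(\nu+1)$; with the correct $\theta$ one has $(\nu+1)\theta=d(\nu-1)/4$, and then your equivalences $(\nu+1)\theta<2\iff \nu<1+8/d$ and $(\nu+1)\theta=2\iff \nu=1+8/d$ are indeed right. Also note that the hypothesis $\nu\in(1,1+8/(d-4))$ is an open interval, so $H^2$ is strictly sub-critical and the blow-up alternative of Theorem~\ref{theorem below d/2}(iv) (or Theorems~\ref{theorem d/2}(ii), \ref{theorem above d/2}(ii) when $d\le 4$) applies directly; the critical case need not be considered.
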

Our next result concerns with the regularity of solutions of (NL4S) in the sub-critical case.
\begin{theorem} \label{theorem regularity}
Let $\beta>\gamma \geq 0$ be such that $\gamma> \Gace$, and also, if $\nu>1$ is not an odd integer, $(\ref{assumption smoothness nonlinearity})$. Let $u_0 \in H^\gamma$ and $u$ be the corresponding $H^\gamma$ solution of \emph{(NL4S)} given in \emph{Theorem $\ref{theorem below d/2}$, Theorem $\ref{theorem d/2}$, Theorem $\ref{theorem above d/2}$}. If $u_0 \in H^{\beta}$, then $u \in C([0,T^*),H^\beta)$. 
\end{theorem}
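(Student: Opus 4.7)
The plan is the standard \emph{persistence of regularity} argument: apply the higher-regularity well-posedness theory to obtain an $H^\beta$ solution on a possibly shorter interval, then use the subcritical Strichartz machinery to show its $H^\beta$ norm cannot blow up before the $H^\gamma$ solution does. Concretely, since $u_0\in H^\beta$ and the smoothness hypothesis on $F$ is assumed up to level $\beta$, Theorem \ref{theorem below d/2} (or Theorem \ref{theorem d/2}, Theorem \ref{theorem above d/2}, depending on where $\beta$ falls) yields a maximal solution $v\in C([0,T^*_\beta),H^\beta)$ to (NL4S). By the $H^\gamma$-level uniqueness (noting $v\in C([0,T^*_\beta),H^\gamma)$ as well), $u\equiv v$ on $[0,\min(T^*,T^*_\beta))$. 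It therefore suffices to rule out $T^*_\beta<T^*$.

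Suppose for contradiction that $T^*_\beta<T^*$. Fix any $T\in(T^*_\beta,T^*)$ and work on $[0,T]$, on which the $H^\gamma$ theory already supplies $u\in L^p([0,T],H^\gamma_q)\cap L^\infty([0,T],H^\gamma)$ with the pair $(p,q)$ from \eqref{define pq} (and, by property (i), any other admissible pair with $\gamma_{a,b}=0$). Because this norm is finite, I can partition $[0,T]$ into finitely many consecutive intervals $J_k=[t_k,t_{k+1}]$ on each of which $\|u\|_{L^p(J_k,L^q)}\leq\eta$ for a small $\eta$ to be fixed. On $J_k$, I feed Duhamel into the Strichartz estimate with derivative $\Lambda^\beta$ applied and control the nonlinearity via the fractional chain rule (Kato--Ponce / Christ--Weinstein), whose use requires precisely the smoothness assumption \eqref{assumption smoothness nonlinearity} with $\beta$ in place of $\gamma$:
\begin{equation*}
\bigl\||u|^{\nu-1}u\bigr\|_{L^{p'}(J_k,\dot{H}^\beta_{q'})} \leq C\,\|u\|_{L^p(J_k,L^q)}^{\nu-1}\,\|u\|_{L^p(J_k,\dot{H}^\beta_q)}.
\end{equation*}
Combined with the Strichartz estimate this gives
\begin{equation*}
\|u\|_{L^\infty(J_k,H^\beta)}+\|u\|_{L^p(J_k,H^\beta_q)} \leq C\,\|u(t_k)\|_{H^\beta} + C\eta^{\nu-1}\,\|u\|_{L^p(J_k,H^\beta_q)},
\end{equation*}
and after choosing $\eta$ so that $C\eta^{\nu-1}\leq 1/2$, a standard absorption yields $\|u(t_{k+1})\|_{H^\beta}\leq 2C\|u(t_k)\|_{H^\beta}$.

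Iterating across the finitely many intervals produces a bound $\|u(t)\|_{H^\beta}\leq C(T,u_0)$ uniformly on $[0,T]$, hence in particular on $[0,T^*_\beta)$. This contradicts the blow-up alternative (iv) of Theorem \ref{theorem below d/2} (or (ii) of Theorems \ref{theorem d/2}, \ref{theorem above d/2}) for the $H^\beta$ solution $v=u$, so $T^*_\beta\geq T^*$ and $u\in C([0,T^*),H^\beta)$ as claimed. The main technical obstacle is the second step: the Kato--Ponce-type fractional product estimate must accommodate a non-integer exponent $\nu$, which is exactly where the regularity hypothesis $\lceil\beta\rceil\leq\nu$ is invoked; the rest (partitioning, absorption, iteration) is routine subcritical Strichartz bookkeeping, and must be done carefully when $\beta\geq d/2$ so that one uses Theorem \ref{theorem d/2} or Theorem \ref{theorem above d/2} together with the appropriate $L^\infty$-control in place of $L^q$.
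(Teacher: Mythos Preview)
Your strategy is the paper's: obtain the $H^\beta$ solution on its maximal interval, suppose it dies before $T^*$, and contradict the $H^\beta$ blowup alternative by propagating an $H^\beta$ bound via Strichartz plus the fractional chain rule, using that the $H^\gamma$-level Strichartz norms are finite on compact subintervals of $[0,T^*)$. The partition-and-absorb bookkeeping you use is a standard variant of the paper's $\epsilon$-tail splitting (and of the Gronwall argument in the $\gamma=d/2$ case).

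Two corrections. First, your displayed nonlinear estimate is off: the H\"older balance in space is $\tfrac{1}{q'}=\tfrac{1}{q}+\tfrac{\nu-1}{n}$ with $n=\tfrac{dq}{d-\gamma q}$, so the prefactor must be $\|u\|_{L^p(J_k,L^n)}^{\nu-1}\lesssim \|u\|_{L^p(J_k,\dot H^\gamma_q)}^{\nu-1}$, not $\|u\|_{L^p(J_k,L^q)}^{\nu-1}$; this is exactly the quantity controlled by the $H^\gamma$ theory, so the rest of your argument goes through unchanged once you partition so that $\|u\|_{L^p(J_k,\dot H^\gamma_q)}\leq\eta$. Second, the case distinction is on $\gamma$, not on $\beta$: for $\gamma\in[0,d/2)$ your argument (with the fix above) already handles every $\beta>\gamma$, including $\beta\geq d/2$, since the Strichartz estimate and Lemma~\ref{lem nonlinear estimates} at level $\beta$ do not care where $\beta$ sits relative to $d/2$. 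What changes when $\gamma\geq d/2$ is that the pair \eqref{define pq} is no longer available; the paper then uses $u\in L^{\nu-1}_{\mathrm{loc}}([0,T^*),L^\infty)$ and Gronwall when $\gamma=d/2$, and the $L^\infty$ blowup criterion of Theorem~\ref{theorem above d/2} when $\gamma>d/2$.
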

The following result is a direct consequence of Theorem $\ref{theorem regularity}$ and the global well-posedness in Corollary $\ref{coro global L2}$ and Proposition $\ref{prop global H2}$.
\begin{coro} \label{coro global}
\begin{itemize}
\item[\emph{(i)}] Let $\gamma \geq 0$ and $\nu \in (1,1+8/d)$ be such that if $\nu$ is not an odd integer, $(\ref{assumption smoothness nonlinearity})$. Then \emph{(NL4S)} is globally well-posed in $H^\gamma$. 
\item[\emph{(ii)}] Let $\gamma \geq 2$, $\nu \in [1+8/d,1+8/(d-4))$ for $d \geq 5$ and $\nu \in [1+8/d,\infty)$ for $d\leq 4$ be such that if $\nu$ is not an odd integer, $(\ref{assumption smoothness nonlinearity})$. Then \emph{(NL4S)} is globally well-posed in $H^\gamma$ provided one of conditions \emph{(i), (iii), (iv)} in \emph{Proposition $\ref{prop global H2}$} is satisfied. 
\end{itemize}
\end{coro}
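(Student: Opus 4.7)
The plan is to bootstrap the already-known global existence at low regularity (in $L^2$ for part (i), in $H^2$ for part (ii)) up to global existence in $H^\gamma$ by a single application of Theorem \ref{theorem regularity}. The only arithmetic verification is that the strict subcriticality assumption $\gamma>\Gact$ in the regularity theorem translates into exactly the stated range of $\nu$ in each part.

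For part (i), fix $u_0\in H^\gamma$ with $\gamma\geq 0$ and $\nu\in(1,1+8/d)$. The case $\gamma=0$ is Corollary \ref{coro global L2} itself, so I would assume $\gamma>0$. Since $u_0\in H^\gamma\subset L^2$, Corollary \ref{coro global L2} produces a global solution $u\in C(\R,L^2)\cap L^p_{\mathrm{loc}}(\R,L^q)$. The hypothesis $\nu<1+8/d$ rewrites as $\Gact=d/2-4/(\nu-1)<0$, so in particular $0>\Gact$, which is the strict subcriticality needed to apply Theorem \ref{theorem regularity} with its $\gamma$ set to $0$ and its $\beta$ set to our $\gamma$. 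The smoothness assumption $\lceil\gamma\rceil\leq\nu$ carried over from the corollary is exactly what the regularity theorem requires. This yields $u\in C([0,\infty),H^\gamma)$, and time reversal symmetry of (NL4S) gives the same on $(-\infty,0]$.

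For part (ii), fix $u_0\in H^\gamma$ with $\gamma\geq 2$ and $\nu$ in the stated range, and assume one of conditions (i), (iii), (iv) of Proposition \ref{prop global H2}. The case $\gamma=2$ is exactly that proposition, so I would assume $\gamma>2$. Since $u_0\in H^\gamma\subset H^2$, Proposition \ref{prop global H2} produces a global solution $u\in C(\R,H^2)$. The upper bound $\nu<1+8/(d-4)$ for $d\geq 5$ (and absence of an upper bound for $d\leq 4$) rewrites as $\Gact<2$, giving the strict subcriticality needed to apply Theorem \ref{theorem regularity} with its $\gamma$ set to $2$ and its $\beta$ set to our $\gamma$. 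The inequality $\gamma\geq 2$ together with $\lceil\gamma\rceil\leq\nu$ also forces $\nu\geq 2\geq \lceil 2\rceil$, so the smoothness hypothesis is consistent at both the base and target level. Theorem \ref{theorem regularity} then upgrades $u$ to $C([0,\infty),H^\gamma)$, and time reversal finishes the argument.

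There is no real obstacle; the corollary is a bookkeeping statement that persistence of regularity propagates along the globally defined low-regularity solutions. The one step deserving care is the conversion of the upper bounds on $\nu$ into the strict inequalities $\Gact<0$ and $\Gact<2$ respectively, which is what makes Theorem \ref{theorem regularity} applicable on the whole interval $[0,\infty)$ rather than merely on a small time window.
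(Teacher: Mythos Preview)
Your proof is correct and follows exactly the approach the paper intends: the paper states only that the corollary ``is a direct consequence of Theorem \ref{theorem regularity} and the global well-posedness in Corollary \ref{coro global L2} and Proposition \ref{prop global H2}'' without further detail, and you have supplied precisely that bootstrapping argument, including the arithmetic check that the ranges of $\nu$ translate into $\Gact<0$ and $\Gact<2$ respectively.
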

Our final result is the following ill-posedness for (NL4S).
\begin{theorem} \label{theorem ill-posedness}
Let $\nu>1$ be such that if $\nu$ is not an odd integer, $\nu\geq k+1$ for some integer $k>d/2$. Then \emph{(NL4S)} is ill-posed in $H^\gamma$ for $\gamma \in ((-\infty,-d/2]\cap (-\infty,\Gace)) \cup [0,\Gace)$. Precisely, if $\gamma \in ((-\infty,-d/2]\cap (-\infty,\Gace)) \cup (0,\Gace)$, then for any $t>0$ the solution map $\Sh \ni u(0)\mapsto u(t)$ of \emph{(NL4S)} fails to be continuous at 0 in the $H^\gamma$ topology. Moreover, if $\Gace>0$, the solution map fails to be uniformly continuous on $L^2$.
\end{theorem}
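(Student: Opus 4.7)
The plan is to follow the Christ--Colliander--Tao scheme \cite{ChristCollianderTao}, which replaces (NL4S) on very short times by its pointwise ODE $i\partial_t v=-\mu|v|^{\nu-1}v$, whose explicit solution
\[
\tilde u(t,x)=\varphi(x)\exp\!\bigl(it\mu|\varphi(x)|^{\nu-1}\bigr)
\]
is used as an approximate solution. Combined with the scaling \eqref{scaling}, this produces families of approximate solutions whose $H^\gamma$-norm can be tuned to be small while their $L^\infty$ phase varies strongly. The smoothness hypothesis $\nu\ge k+1$ with $k>d/2$ (in the non-odd case) ensures that the phase $e^{it\mu|\varphi|^{\nu-1}}$ can be differentiated enough times so that $\Delta^2\tilde u$ is under control in the function spaces generated by the scaling.

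For $\gamma\in[0,\Gact)$, and for the non-uniform continuity on $L^2$ when $\Gact>0$, I take $\varphi\in\Sh$ together with two real numbers $a_1,a_2$ close to $1$, set $\varphi_j=a_j\varphi$, and rescale to $\varphi_{j,\lambda}(x)=\lambda^{-4/(\nu-1)}\varphi_j(x/\lambda)$. The associated ODE solutions differ only through the phase $\exp(it\mu|a_j\varphi|^{\nu-1})$, whose modulus of difference reaches order one as soon as $t\gtrsim 1/|a_1^{\nu-1}-a_2^{\nu-1}|$; after rescaling this becomes a time $t_\lambda\sim\lambda^{4}/|a_1^{\nu-1}-a_2^{\nu-1}|$. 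Since $\|\varphi_{j,\lambda}\|_{\dot H^\gamma}=\lambda^{\Gact-\gamma}\|\varphi_j\|_{\dot H^\gamma}\to 0$ as $\lambda\to 0$ while $\|\tilde u_{1,\lambda}(t_\lambda)-\tilde u_{2,\lambda}(t_\lambda)\|_{\dot H^\gamma}\gtrsim\lambda^{\Gact-\gamma}$ at this $t_\lambda$, letting $\lambda\to 0$ with the $a_j$'s fixed yields the discontinuity at $0$ in $H^\gamma$, and letting $|a_1-a_2|\to 0$ with $\lambda$ fixed yields the non-uniform continuity on $L^2$.

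For $\gamma\in(-\infty,-d/2]\cap(-\infty,\Gact)$ the same scheme applies, but the comparison between $\|\varphi_{j,\lambda}\|_{H^\gamma}$ and $\|\tilde u_{1,\lambda}(t_\lambda)-\tilde u_{2,\lambda}(t_\lambda)\|_{H^\gamma}$ is now dominated by the inhomogeneous $L^2$-type component, and the threshold $\gamma\le -d/2$ is exactly what permits this comparison to go the right way. This plays the role of the Galilean-boost trick available for the classical Schr\"odinger equation on the intermediate range $(-d/2,0)$, which as emphasized by the author is unavailable for (NL4S) and is the reason that range is left out of the statement.

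In every case the proof is closed up by writing the genuine solution as $u_{j,\lambda}=\tilde u_{j,\lambda}+w_{j,\lambda}$ and estimating the remainder via the Duhamel formula
\[
w_{j,\lambda}(t)=-i\int_0^t e^{i(t-s)\Delta^2}\bigl(F(u_{j,\lambda})-F(\tilde u_{j,\lambda})\bigr)(s)\,ds+i\int_0^t e^{i(t-s)\Delta^2}\Delta^2\tilde u_{j,\lambda}(s)\,ds,
\]
together with Strichartz estimates and the fractional chain/Leibniz rules applied to the phase $|\varphi|^{\nu-1}$. I expect this last error analysis to be the main obstacle: one must verify that on $[0,t_\lambda]$ the remainder $w_{j,\lambda}$ is strictly smaller in $H^\gamma$ than the main difference $\tilde u_{1,\lambda}-\tilde u_{2,\lambda}$, which requires careful tracking of how each derivative falling on $e^{it\mu|\varphi|^{\nu-1}}$ scales in $\lambda$, and it is precisely here that the assumption $\nu\ge k+1$ with $k>d/2$ is used to absorb the four derivatives produced by $\Delta^2$.
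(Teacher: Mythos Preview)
Your proposal misses the central technical device of the Christ--Colliander--Tao method as implemented in the paper: the \emph{small dispersion parameter}. The paper does not merely rescale the ODE solution by $\lambda$; it first introduces a second parameter $0<\delta\ll 1$ and studies
\[
i\partial_t\phi+\delta^{4}\Delta^{2}\phi=-\mu|\phi|^{\nu-1}\phi,
\]
which is related to (NL4S) by the spatial dilation $u(t,x)=\phi(t,\delta x)$. The key Lemma~\ref{lem small dispersion analysis} is that the ODE profile $\phi^{(0)}$ approximates the true solution $\phi^{(\delta)}$ in $H^{k}$ on the \emph{logarithmically long} interval $|t|\le c|\log\delta|^{c}$, not merely for $t=O(1)$. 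The genuine solutions used are the two-parameter family $u^{(\delta,\lambda)}(t,x)=\lambda^{-4/(\nu-1)}\phi^{(\delta)}(\lambda^{-4}t,\lambda^{-1}\delta x)$, with $\lambda$ and $\delta$ coupled by $\lambda^{\Gact-\gamma}\delta^{\gamma-d/2}=\ep$.

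The gap shows up concretely in your treatment of $0<\gamma<\Gact$. You claim that $\|\varphi_{j,\lambda}\|_{\dot H^\gamma}\sim\lambda^{\Gact-\gamma}\to 0$ while $\|\tilde u_{1,\lambda}(t_\lambda)-\tilde u_{2,\lambda}(t_\lambda)\|_{\dot H^\gamma}\gtrsim\lambda^{\Gact-\gamma}$ and conclude discontinuity at $0$; but both quantities vanish at the same rate, so this proves nothing. What is required here is \emph{norm inflation}: a single solution with $\|u(0)\|_{H^\gamma}<\ep$ and $\|u(t)\|_{H^\gamma}>\ep^{-1}$ for some $t<\ep$. The paper obtains this from the growth $\|\phi^{(0)}(t)\|_{H^\gamma}\sim t^{\gamma}$, which after the two-parameter rescaling gives $\|u^{(\delta,\lambda)}(\lambda^{4}t)\|_{H^\gamma}\gtrsim \ep\, t^{\gamma}$, and then takes $t=c|\log\delta|^{c}\to\infty$. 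With only $\lambda$ at your disposal the ODE approximation survives only for $t=O(1)$, so there is no growth and no inflation. The same difficulty hits your $L^{2}$ argument: the phases decorrelate at time $T\sim|a_{1}-a_{2}|^{-1}\to\infty$, but your error term $\int_{0}^{t}e^{i(t-s)\Delta^{2}}\Delta^{2}\tilde u_{j,\lambda}\,ds$ grows polynomially over such intervals and swamps the main term; the paper absorbs this by taking $\delta$ small enough that $c|\log\delta|^{c}\ge |a-a'|^{-1}$. Finally, for $\gamma\le -d/2$ the mechanism is not an ``$L^{2}$-type component'' comparison: one chooses $\phi_{0}$ with $\hat\phi_{0}(\xi)=O(|\xi|^{\kappa})$ near $0$ so that the data is small in $H^{\gamma}$, and then exploits that the nonlinear phase destroys these moments, forcing $|[\phi^{(0)}(1)]\hat{\ }(0)|\ge c$ and hence a large low-frequency contribution to $\|u^{(\delta,\lambda)}(\lambda^{4})\|_{H^{\gamma}}$.
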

The proof of Theorem $\ref{theorem ill-posedness}$ bases on the small dispersion analysis given in \cite{ChristCollianderTao}. Note that when $\nu=3$ and $\mu=1$ corresponding to the defocusing cubic nonlinearity, Pausader in \cite{Pausadercubic} proves the ill-posedness for (NL4S) in $H^2(\R^d)$ with $d\geq 9$. \newline 
\indent This note is organized as follows. In Section $\ref{section well-posedness}$, we recall Strichartz estimate for the linear fourth-order Schr\"odinger equation and the nonlinear fractional derivatives. We end this section with the proof the local well-posedness given in Theorem $\ref{theorem below d/2}$ and Theorem $\ref{theorem d/2}$. In Section $\ref{section regularity}$, we give the proof of the regularity for solutions of (NL4S) given in Theorem $\ref{theorem regularity}$. Finally, the proof of the ill-posedness result is given in Section $\ref{section ill-posedness}$.
\section{Well-posedness} \label{section well-posedness}
\setcounter{equation}{0}
In this section, we will give the proofs of the local well-posedness given in Theorem $\ref{theorem below d/2}$ and Theorem $\ref{theorem d/2}$. Our proofs are based on the standard contraction mapping argument using Strichartz estimate and nonlinear fractional derivatives (see Subsection $\ref{subsection fractional derivative}$). 
\subsection{Strichartz estimate}
In this subsection, we recall Strichartz estimate for the fourth-order Schr\"odinger equation.
\begin{prop}[\cite{Dinh}] \label{prop strichartz}
Let $\gamma \in \R$ and $u$ be a (weak) solution to the linear fourth-order Schr\"odinger equation, namely
\[
u(t)=e^{it\Delta^2}u_0+\int_0^t e^{i(t-s)\Delta^2} F(s)ds,
\]
for some data $u_0, F$. Then for all $(p,q)$ and $(a,b)$ admissible with $q<\infty$ and $b<\infty$,
\begin{align}
\|u\|_{L^p(\R, L^q)} \lesssim \|u_0\|_{\dot{H}^{\gamma_{p,q}}} +\|F\|_{L^{a'}(\R, L^{b'})}, \label{strichartz estimate}
\end{align}
provided that
\begin{align}
\gamma_{p,q}=\gamma_{a',b'}+4.
\end{align}
Here $(a,a')$ is a conjugate pair and similarly for $(b,b')$.
\end{prop}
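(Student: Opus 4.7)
The plan is the classical Keel--Tao scheme adapted to the fourth-order propagator: a pointwise dispersive estimate, a $TT^*$ reduction to Hardy--Littlewood--Sobolev, and a Littlewood--Paley step to reach the full admissibility range. I would begin by establishing
\[
\|e^{it\Delta^2}\|_{L^1\to L^\infty}\lesssim |t|^{-d/4}, \qquad t\neq 0.
\]
Writing $e^{it\Delta^2}u_0 = K_t \ast u_0$ with $K_t(x)=c_d\int e^{i(t|\xi|^4+x\cdot\xi)}\,d\xi$ and rescaling $\xi=|t|^{-1/4}\eta$ reduces this to a uniform bound, in the parameter $y=x/|t|^{1/4}$, on the oscillatory integral $\int e^{i(|\eta|^4+y\cdot\eta)}\,d\eta$. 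The phase is nondegenerate away from the origin but its Hessian vanishes at $\eta=0$, so a dyadic decomposition in $|\eta|$ followed by stationary phase / Van der Corput on each annulus is needed; this is where most of the technical work sits. Interpolating the resulting decay with the $L^2$-unitarity of $e^{it\Delta^2}$ then yields $\|e^{it\Delta^2}\|_{L^{q'}\to L^q}\lesssim |t|^{-(d/4)(1-2/q)}$ for $q\in[2,\infty]$.

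Once the dispersive decay is in place, the homogeneous Strichartz at sharp fourth-order admissible pairs (those with $4/p+d/q=d/2$, so $\gamma_{p,q}=0$) follows from the usual $TT^*$ argument: one bounds $\|TT^*F\|_{L^p_t L^q_x}$ by convolution with $|t|^{-(d/4)(1-2/q)}$ in time and applies Hardy--Littlewood--Sobolev, whose exponent identity unwinds to precisely $4/p+d/q=d/2$. To reach the full range $2/p+d/q\leq d/2$ allowed by the statement, in which $\gamma_{p,q}$ may be of either sign, I would combine the sharp estimate with a frequency-localized argument. For $\gamma_{p,q}\geq 0$, a Bernstein embedding $\|P_N f\|_{L^q}\lesssim N^{\gamma_{p,q}}\|P_N f\|_{L^{\tilde q}}$ from a sharp admissible companion $(p,\tilde q)$ with $\tilde q\leq q$ produces the derivative loss; for $\gamma_{p,q}<0$, the gain comes from the stronger dispersion of $\Delta^2$ by applying the sharp estimate to each dyadic piece and summing the factors $N^{\gamma_{p,q}}$ via a Littlewood--Paley square-function identity into the norm $\|u_0\|_{\dot H^{\gamma_{p,q}}}$.

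For the inhomogeneous piece, the scaling identity $\gamma_{p,q}=\gamma_{a',b'}+4$ stated in the proposition is precisely what renders the Duhamel convolution $\int_0^t e^{i(t-s)\Delta^2}F(s)\,ds : L^{a'}_t L^{b'}_x \to L^p_t L^q_x$ scaling invariant (a quick scaling check against the four-parameter transformation $(t,x)\mapsto(\lambda^4 t,\lambda x)$ verifies this). I would first derive the non-retarded bilinear form estimate by a $TT^*$/HLS argument paralleling the homogeneous case, then recover the retarded integral via the Christ--Kiselev lemma, which applies away from the double endpoint; since the statement assumes $q,b<\infty$, the genuine endpoint $(2,\infty)$ is excluded and no Keel--Tao endpoint machinery is needed beyond HLS. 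The main obstacle I foresee is the dispersive estimate itself, owing to the degeneracy of $|\xi|^4$ at the origin, together with the bookkeeping required to extract the correct $\gamma_{p,q}$ in the Schr\"odinger-admissible regime where the plain Bernstein embedding does not immediately deliver the claimed gain.
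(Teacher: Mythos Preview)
Your route is sound, but it is precisely the one the paper contrasts with its own. The paper does not prove the proposition here; it cites \cite{Dinh} and, in Remark~\ref{rem strichartz}, says explicitly that the argument there does \emph{not} pass through the Ben-Artzi--Koch--Saut dispersive estimate for the fundamental solution, but instead through a ``scaling technique which is similar to those of wave equation.'' Concretely, that technique Littlewood--Paley localizes the solution first, rescales each piece $P_N u$ to unit frequency---after which $e^{it\Delta^2}$ restricted to $|\xi|\sim 1$ is a propagator with smooth nondegenerate phase on a fixed annulus---invokes the known unit-frequency Strichartz bound, then undoes the scaling and sums, the resulting powers of $N$ assembling into the $\dot H^{\gamma_{p,q}}$ norm. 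Your plan instead establishes the global $L^1\to L^\infty$ decay $|t|^{-d/4}$ up front, which forces you to confront the degeneracy of the Hessian of $|\xi|^4$ at the origin (your dyadic-in-$|\eta|$ step); once that is in hand, the rest is the standard $TT^*$/Hardy--Littlewood--Sobolev/Christ--Kiselev package, and your Littlewood--Paley step for $\gamma_{p,q}<0$ is in effect the scaling argument recovered at the end. The paper's organization is lighter because it never needs the pointwise kernel bound at all, reducing everything to already-known frequency-localized estimates; your approach is more self-contained and makes the time decay of the propagator explicit.
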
 
\begin{rem} \label{rem strichartz}
The estimate $(\ref{strichartz estimate})$ is exactly the one given in \cite{Pausader} or \cite{Pausadercubic} where the author considered $(p,q)$ and $(a,b)$ are Schr\"odinger admissible, i.e.
\[
p, q \in [2,\infty]^2, \quad (p,q,d) \ne (2,\infty,2), \quad \frac{2}{p}+\frac{d}{q}=\frac{d}{2}.
\]
We refer to \cite{Dinh} for the proof of Proposition $\ref{prop strichartz}$. Note that rather than using directly a dedicate dispersive estimate of \cite{Ben-ArtziKochSaut} for the fundamental solution of the homogeneous fourth-order Schr\"odinger equation, we use scaling technique which is similar to those of wave equation (see e.g. \cite{KeelTaoTTstar}). Our Strichartz estimate is flexible enough to show the local well-posedness for (NL4S) in both sub-critical and critical cases. 
\end{rem}
We also have the following local Strichartz estimate (see again \cite{Dinh}).
\begin{coro} \label{coro local strichartz}
Let $\gamma \geq 0$ and $I$ be a bounded interval. If $u$ is a weak solution to the linear fourth-order Schr\"odinger equation for some data $u_0, F$, then for all $(p,q)$ admissible satisfying $q<\infty$,
\begin{align}
\|u\|_{L^p(I,H^{\gamma-\gamma_{p,q}}_q)} \lesssim \|u_0\|_{H^\gamma} + \|F\|_{L^1(I,H^\gamma)}. \label{local strichartz}
\end{align}
\end{coro}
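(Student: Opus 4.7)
The plan is to derive Corollary \ref{coro local strichartz} from Proposition \ref{prop strichartz} by reducing to a sharp fourth-order admissible pair and handling the general admissible pair by Sobolev embedding in space. First I would set $v = \langle\Lambda\rangle^\gamma u$, which solves the same linear fourth-order Schr\"odinger equation with data $v_0 = \langle\Lambda\rangle^\gamma u_0$ and forcing $G = \langle\Lambda\rangle^\gamma F$; since $\|v_0\|_{L^2} = \|u_0\|_{H^\gamma}$ and $\|G\|_{L^1(I,L^2)} = \|F\|_{L^1(I,H^\gamma)}$, the corollary is equivalent to
\[
\|\langle\Lambda\rangle^{-\gamma_{p,q}} v\|_{L^p(I,L^q)} \lesssim \|v_0\|_{L^2} + \|G\|_{L^1(I,L^2)}.
\]

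The base case is the sharp fourth-order pair $(p,\tilde q)$ with $\tilde q = 2dp/(dp-8)$, for which $\gamma_{p,\tilde q} = 0$. Applying Proposition \ref{prop strichartz} with LHS pair $(p,\tilde q)$ and the energy pair $(\infty,2)$ on the RHS (whose dual $(1,2)$ satisfies $\gamma_{1,2}+4 = -4+4 = 0 = \gamma_{p,\tilde q}$, meeting the compatibility condition), then extending $G$ by zero outside $I$ to restrict the resulting global estimate, yields $\|v\|_{L^p(I,L^{\tilde q})} \lesssim \|v_0\|_{L^2} + \|G\|_{L^1(I,L^2)}$.

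When $\gamma_{p,q} \geq 0$, i.e.\ $q \geq \tilde q$, the operator $\langle\Lambda\rangle^{-\gamma_{p,q}}$ is a Bessel potential of nonnegative order, and the Hardy--Littlewood--Sobolev inequality furnishes the pointwise-in-time spatial embedding $\|\langle\Lambda\rangle^{-\gamma_{p,q}} f\|_{L^q} \lesssim \|f\|_{L^{\tilde q}}$ at the natural scale $d/\tilde q - d/q = \gamma_{p,q}$. Applied to $v$ and integrated in $L^p(I)$, this converts the base-case estimate into the desired bound for $(p,q)$.

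The main obstacle is the case $\gamma_{p,q} < 0$, so $q < \tilde q$, where $\langle\Lambda\rangle^{-\gamma_{p,q}} = \langle\Lambda\rangle^{|\gamma_{p,q}|}$ is a positive-order derivative and the Sobolev reduction above fails. Here one applies Proposition \ref{prop strichartz} directly to $\Lambda^{|\gamma_{p,q}|} v$ (which still solves the same linear equation) with LHS pair $(p,q)$ and the admissible RHS pair $(\infty, 2d/(d+2\gamma_{p,q}))$, whose dual $(1, b')$ with $b' = 2d/(d-2\gamma_{p,q}) < 2$ fulfills the compatibility condition $\gamma_{p,q} = \gamma_{1,b'}+4$; the homogeneous contribution $\|u_0\|_{\dot H^\gamma}$ drops out automatically, and the inhomogeneous contribution must be absorbed into $\|G\|_{L^1(I,L^2)}$ using the fourth-order derivative-gain structure. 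I expect this last step---matching the Fourier multipliers and Sobolev-type absorption when the dual exponent $b'$ falls below $2$---to be the technical heart of the proof, since it is precisely where the additional smoothing of the fourth-order dispersion (beyond a second-order Schr\"odinger analogue) has to be exploited.
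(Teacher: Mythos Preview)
Your reduction to $\gamma=0$ and your treatment of the case $\gamma_{p,q}\geq 0$ are fine. The problem is the case $\gamma_{p,q}<0$. The step you flag as ``the technical heart'' is not merely technical: it is false as stated. With $b'=2d/(d-2\gamma_{p,q})<2$ and $|\gamma_{p,q}|>0$, the inequality you need is $\|\Lambda^{|\gamma_{p,q}|}G\|_{L^{b'}}\lesssim\|G\|_{L^2}$, which asks for a Sobolev embedding from $L^2$ into a \emph{smaller} Lebesgue exponent while simultaneously raising the number of derivatives; no such bound holds on $\R^d$. There is also no hidden ``additional smoothing of the fourth-order dispersion'' to invoke here: the full $4/p$ gain is already encoded in the exponent $\gamma_{p,q}$ of Proposition~\ref{prop strichartz}. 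A telling symptom of the gap is that your argument for $\gamma_{p,q}<0$ never uses the hypothesis that $I$ is bounded, yet the \emph{global} version of (\ref{local strichartz}) fails for such pairs (take $F=0$ and $u_0$ supported at very low frequency: $\|\langle\Lambda\rangle^{-\gamma_{p,q}}u_0\|_{L^2}$ can be arbitrarily large compared with $\|u_0\|_{L^2}$).

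The paper does not spell out a proof (it refers to \cite{Dinh}), but the standard route is this. First obtain, for every admissible $(p,q)$ with $q<\infty$, the scale-invariant estimate $\|v\|_{L^p(I,\dot H^{-\gamma_{p,q}}_q)}\lesssim\|v_0\|_{L^2}+\|G\|_{L^1(I,L^2)}$: the homogeneous part is exactly Proposition~\ref{prop strichartz} shifted by $\Lambda^{-\gamma_{p,q}}$, and the Duhamel part follows by Minkowski in $s$ (no dual-pair compatibility is needed when the forcing sits in $L^1_t$). When $\gamma_{p,q}\geq 0$ this already controls $\|v\|_{L^p(I,H^{-\gamma_{p,q}}_q)}$ since $\langle\xi\rangle^{-\gamma_{p,q}}\lesssim|\xi|^{-\gamma_{p,q}}$ is a Mikhlin multiplier. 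When $\gamma_{p,q}<0$ one has $H^{-\gamma_{p,q}}_q=L^q\cap\dot H^{-\gamma_{p,q}}_q$; the $\dot H^{-\gamma_{p,q}}_q$ piece is handled as above, and for the $L^q$ piece one passes to the pair $(\tilde p,q)$ with $\gamma_{\tilde p,q}=0$ (one checks $\tilde p\in(p,\infty]$ whenever $\gamma_{p,q}<0$), applies the previous estimate to get $\|v\|_{L^{\tilde p}(I,L^q)}\lesssim\|v_0\|_{L^2}+\|G\|_{L^1(I,L^2)}$, and then uses H\"older in time on the bounded interval to descend from $L^{\tilde p}(I)$ to $L^p(I)$. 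This is where the hypothesis ``$I$ bounded'' enters, and it replaces the unworkable dual-exponent manipulation in your proposal.
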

\subsection{Nonlinear fractional derivatives} \label{subsection fractional derivative}
In this subsection, we recall some nonlinear fractional derivatives estimates related to our purpose. Let us start with the following fractional Leibniz rule (or Kato-Ponce inequality). We refer to \cite{GrafakosOh} for the proof of a more general result.
\begin{prop} \label{prop fractional leibniz}
Let $\gamma \geq 0, 1<r<\infty$ and $1<p_1, p_2, q_1, q_2 \leq \infty$ satisfying
\[
\frac{1}{r}=\frac{1}{p_1}+\frac{1}{q_1}=\frac{1}{p_2}+\frac{1}{q_2}.
\]
Then there exists $C=C(d,\gamma, r, p_1, q_1, p_2, q_2)>0$  such that for all $u, v \in \Sh$,
\[
\|\Lambda^\gamma(uv)\|_{L^r} \leq C\Big( \|\Lambda^\gamma u\|_{L^{p_1}} \|v\|_{L^{q_1}} + \|u\|_{L^{p_2}} \|\Lambda^\gamma v\|_{L^{q_2}}\Big).
\]
\end{prop}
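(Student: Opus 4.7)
The plan is to establish the Kato--Ponce inequality by Littlewood--Paley decomposition and Bony's paraproduct splitting. Fix a smooth partition of unity $1 = \psi(\xi) + \sum_{j \geq 0} \varphi(2^{-j}\xi)$ with $\psi, \varphi$ supported in a ball and an annulus respectively, and let $\Delta_j$ denote the associated frequency projections. First I would write
\[
uv = \pi_1(u,v) + \pi_2(u,v) + \pi_3(u,v),
\]
where $\pi_1$ collects the ``low--high'' pairs $\sum_{j \leq k-3} \Delta_j u \, \Delta_k v$, $\pi_2$ the symmetric ``high--low'' pairs, and $\pi_3$ the ``high--high'' diagonal $\sum_{|j-k| \leq 2} \Delta_j u \, \Delta_k v$. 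The point of the decomposition is that in $\pi_1$ and $\pi_2$ each summand is frequency-localized in a fixed annulus at scale $2^k$ (resp.\ $2^j$), while in $\pi_3$ the summand is localized in a \emph{ball} of radius $\sim 2^j$.

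For $\pi_1$, since $\Lambda^\gamma$ acts like multiplication by $2^{k\gamma}$ on each summand, I would bound
\[
\|\Lambda^\gamma \pi_1(u,v)\|_{L^r} \lesssim \Bigl\| \Bigl( \sum_k 2^{2k\gamma} \bigl| S_{k-3}u \cdot \Delta_k v \bigr|^2 \Bigr)^{1/2} \Bigr\|_{L^r}
\]
via the vector-valued Littlewood--Paley square function characterization of $\dot H^\gamma_r$ (valid for $1<r<\infty$). Then I would pull the low-frequency factor $S_{k-3} u$ out by the pointwise bound $|S_{k-3} u| \lesssim \Mc u$ (Hardy--Littlewood maximal function), apply H\"older with exponents $(p_2, q_2)$, and finish with Fefferman--Stein vector-valued maximal inequality and the square function estimate $\|(\sum_k 2^{2k\gamma}|\Delta_k v|^2)^{1/2}\|_{L^{q_2}} \sim \|\Lambda^\gamma v\|_{L^{q_2}}$. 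This yields the $\|u\|_{L^{p_2}} \|\Lambda^\gamma v\|_{L^{q_2}}$ term. The piece $\pi_2$ is handled symmetrically and produces the other term.

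The diagonal term $\pi_3$ is the main obstacle, because the summands are supported in balls rather than annuli, so $\Lambda^\gamma$ does not automatically localize at scale $2^j$. Here I would insert an extra projection $\tilde\Delta_\ell$ onto annuli and exploit cancellation: for each $\ell$, only the $j$ with $2^j \gtrsim 2^\ell$ contribute to $\tilde\Delta_\ell \pi_3$. After applying $\Lambda^\gamma$ this gives a convergent geometric sum provided $\gamma \geq 0$, which leaves
\[
\|\Lambda^\gamma \pi_3(u,v)\|_{L^r} \lesssim \Bigl\| \Bigl( \sum_j 2^{2j\gamma} |\Delta_j u \, \tilde\Delta_j v|^2 \Bigr)^{1/2} \Bigr\|_{L^r},
\]
and after H\"older (in either $(p_1,q_1)$ or $(p_2,q_2)$) and the Fefferman--Stein inequality this is absorbed into either of the two right-hand terms. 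Combining the estimates for $\pi_1, \pi_2, \pi_3$ and using $\|uv\|_{L^r} \lesssim \|u\|_{L^{p_i}}\|v\|_{L^{q_i}}$ to control the low-frequency part $\|\psi(D)(uv)\|_{L^r}$ yields the full inhomogeneous inequality. The delicate point throughout is the endpoint case where one of the $p_i$ or $q_i$ equals $\infty$: there one must replace the corresponding $L^\infty$ bound by a BMO/Besov variant, which is where the argument in Grafakos--Oh is sharper than the classical Kato--Ponce statement and is the reason the reference is quoted rather than a self-contained proof given.
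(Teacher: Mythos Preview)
The paper does not prove this proposition; immediately before the statement it says ``We refer to \cite{GrafakosOh} for the proof of a more general result,'' and nothing further is given. Your paraproduct sketch is exactly the standard route taken in that literature: Bony's decomposition into low--high, high--low, and diagonal pieces, the Littlewood--Paley square-function characterization of $\dot H^\gamma_r$ (for $1<r<\infty$) combined with H\"older and the Fefferman--Stein maximal inequality for the off-diagonal terms, and the geometric-sum argument (using $\gamma \geq 0$) to handle the resonant piece. You also correctly flag the only genuine subtlety---the endpoints $p_i=\infty$ or $q_i=\infty$, where the square function in $L^\infty$ is unavailable and one must pass through a BMO or Besov substitute---which is precisely the refinement that \cite{GrafakosOh} supplies over the original Kato--Ponce argument. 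In short, your proposal is correct and in fact provides more than the paper does, since the paper outsources the proof entirely.
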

We also have the following fractional chain rule (see \cite{ChristWeinstein} or \cite{Staffilani}).
\begin{prop}\label{prop fractional chain}
Let $F \in C^1(\C, \C)$ and $G \in C(\C, \R^+)$ such that $F(0)=0$ and 
\[
|F'( \theta z+ (1-\theta) \zeta)| \leq \mu(\theta) (G(z)+G(\zeta)), \quad z,\zeta \in \C, \quad 0 \leq \theta \leq 1,
\]
where $\mu \in L^1((0,1))$. Then for $\gamma \in (0,1)$ and $1 <r, p <\infty$, $1 <q \leq \infty$ satisfying 
\[
\frac{1}{r}=\frac{1}{p}+\frac{1}{q},
\] 
there exists $C=C(d,\mu, \gamma, r, p,q)>0$ such that for all $u \in \Sh$,
\[
\|\Lambda^\gamma F(u) \|_{L^r} \leq C \|F'(u) \|_{L^q} \|\Lambda^\gamma u \|_{L^p}.
\]
\end{prop}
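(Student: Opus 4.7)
The plan is to follow the classical Christ--Weinstein strategy, whose central tool is the equivalent square function characterization of the fractional Laplacian valid for $\gamma \in (0,1)$:
\[
\|\Lambda^\gamma f\|_{L^r} \sim \|S_\gamma f\|_{L^r}, \qquad S_\gamma f(x) := \Big( \int_{\R^d} \frac{|f(x+h) - f(x)|^2}{|h|^{d+2\gamma}} dh \Big)^{1/2},
\]
which holds for every $1 < r < \infty$. This reduces the estimate to bounding $\|S_\gamma (F(u))\|_{L^r}$ by $\|F'(u)\|_{L^q}\|\Lambda^\gamma u\|_{L^p}$.

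The first step would be to exploit the fundamental theorem of calculus to write
\[
F(u(x+h)) - F(u(x)) = (u(x+h) - u(x)) \int_0^1 F'\big(\theta u(x+h) + (1-\theta) u(x)\big) d\theta,
\]
and then invoke the hypothesis on $F'$ to obtain the pointwise majorization
\[
|F(u(x+h)) - F(u(x))| \leq \|\mu\|_{L^1((0,1))}\,\big(G(u(x)) + G(u(x+h))\big)\,|u(x+h) - u(x)|.
\]
Substituting into the square function and using the elementary inequality $(a+b)^2 \leq 2a^2 + 2b^2$ yields the splitting
\[
S_\gamma(F(u))(x) \lesssim G(u(x))\, S_\gamma u(x) + T(u)(x), \qquad T(u)(x) := \Big( \int_{\R^d} \frac{G(u(x+h))^2 |u(x+h) - u(x)|^2}{|h|^{d+2\gamma}} dh \Big)^{1/2}.
\]

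The first term is straightforward: by H\"older's inequality with exponent relation $\tfrac{1}{r} = \tfrac{1}{p} + \tfrac{1}{q}$ together with the equivalence of norms,
\[
\|G(u)\, S_\gamma u\|_{L^r} \leq \|G(u)\|_{L^q} \|S_\gamma u\|_{L^p} \lesssim \|F'(u)\|_{L^q} \|\Lambda^\gamma u\|_{L^p},
\]
using that $G$ serves as a majorant of $|F'|$ in the natural examples (indeed one may, by polarization, take $G = |F'|$).

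The main obstacle is estimating $T(u)$, because the weight $G(u(x+h))$ now depends on the translation variable $h$ and cannot be pulled outside the integral. The standard remedy is a duality argument: pair $T(u)$ against a test function $g \in L^{r'}$, square $T(u)$ and apply Fubini to interchange the order of integration in $x$ and $h$, and then use translation invariance of Lebesgue measure to substitute $x \mapsto x - h$ so that $G(u)$ is evaluated at the base point. A final application of H\"older in $x$ and the equivalence $\|S_\gamma u\|_{L^p} \sim \|\Lambda^\gamma u\|_{L^p}$ produce the same bound $\|F'(u)\|_{L^q}\|\Lambda^\gamma u\|_{L^p}$. Combining the two contributions completes the proof.
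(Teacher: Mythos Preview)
The paper does not supply its own proof of this proposition; it simply refers the reader to Christ--Weinstein and Staffilani. Your proposal is in the spirit of those references, but two points deserve attention.

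First, your treatment of $T(u)$ has a genuine gap. Running your duality argument on $T(u)^2$ (say for $r\geq 2$, so that $T(u)^2\in L^{r/2}$), Fubini and the substitution $x\mapsto x-h$ give
\[
\int_{\R^d} T(u)(x)^2\, g(x)\, dx \;=\; \int_{\R^d} G(u(x))^2 \int_{\R^d} \frac{|u(x+h)-u(x)|^2}{|h|^{d+2\gamma}}\, g(x+h)\, dh\, dx,
\]
so the test function now carries the shift $g(x+h)$ and still sits inside the $h$-integral together with the difference quotient. There is no way to separate it by H\"older so as to recover $S_\gamma u(x)^2$ times a factor depending only on $x$; your argument, as written, only closes when $r=2$. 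In Christ--Weinstein the corresponding term is handled not by a change of variables but by controlling the translated factor $G(u(x+h))$ through the Hardy--Littlewood maximal function $M(G(u))(x)$ (in their Littlewood--Paley formulation), and then invoking the $L^q$-boundedness of $M$ for $1<q\leq\infty$. That maximal-function step is the missing ingredient in your sketch.

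Second, a smaller issue: the argument naturally produces $\|G(u)\|_{L^q}$ on the right-hand side, and your remark that ``one may, by polarization, take $G=|F'|$'' is not justified by the stated hypothesis. Setting $z=\zeta$ yields only $|F'(z)|\leq 2\mu(\theta)G(z)$, i.e.\ $G\gtrsim |F'|$, which is the wrong direction for the replacement you want. As written, the conclusion with $\|F'(u)\|_{L^q}$ rather than $\|G(u)\|_{L^q}$ does not follow from the proof; in the paper's applications one has $G\sim|F'|$, so the distinction is harmless there, but your sketch should record the bound with $\|G(u)\|_{L^q}$.
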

Combining the fractional Leibniz rule and the fractional chain rule, one has the following result (see the appendix of \cite{Kato95}).
\begin{lem} \label{lem nonlinear estimates}
Let $F \in C^k(\C, \C), k \in \N \backslash \{0\}$. Assume that there is $\nu \geq k$  such that 
\[
|D^iF (z)| \leq C |z|^{\nu-i}, \quad z \in \C, \quad i=1,2,...., k.
\]
Then for $\gamma \in [0,k]$ and $1 <r, p <\infty$, $1 <q \leq \infty$ satisfying $\frac{1}{r}=\frac{1}{p}+\frac{\nu-1}{q}$, there exists $C=C(d, \nu, \gamma, r,p,q)>0$ such that for all $u \in \Sh$,
\begin{align}
\|\Lambda^\gamma F(u)\|_{L^r} \leq C \|u\|^{\nu-1}_{L^q} \|\Lambda^\gamma u \|_{L^p}. \label{nonlinear estimate} 
\end{align}
Moreover, if $F$ is a homogeneous polynomial in $u$ and $\overline{u}$, then $(\ref{nonlinear estimate})$ holds true for any $\gamma \geq 0$.
\end{lem}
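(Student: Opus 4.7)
The plan is to prove the estimate by induction on $\lceil \gamma \rceil$, with separate treatments of the integer and fractional parts, and I assume implicitly that $F(0)=0$ (which holds in all intended applications such as $F(z)=-\mu|z|^{\nu-1}z$, and makes the $\gamma=0$ endpoint meaningful). For the base case $\gamma=0$, since $|F'(z)| \leq C|z|^{\nu-1}$, integration of $F$ along $[0,u]$ gives $|F(u)| \leq C|u|^\nu$, and a single Hölder with the identity $\frac{1}{r}=\frac{1}{p}+\frac{\nu-1}{q}$ yields the bound (with $\Lambda^0 u = u$). For $\gamma \in (0,1]$, apply Proposition \ref{prop fractional chain} directly to $F$: the hypothesis $|F'(z)| \leq C|z|^{\nu-1}$ furnishes the mean-value control with $G(z)=C|z|^{\nu-1}$ and $\mu\equiv 1$, and one more Hölder step converts $\|F'(u)\|_{L^{q/(\nu-1)}}$ into $C\|u\|_{L^q}^{\nu-1}$.

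For integer $\gamma=m\in\{2,\ldots,k\}$, iterate the ordinary chain rule to obtain, for any multi-index $|\alpha|=m$, the Fa\`a di Bruno expansion
\[
\partial^\alpha F(u)=\sum_{j=1}^m \sum_{|\beta_1|+\cdots+|\beta_j|=m} c_{j,\beta}\, F^{(j)}(u)\, \partial^{\beta_1}u \cdots \partial^{\beta_j}u,
\]
where $F^{(j)}$ denotes any $j$-th real partial derivative of $F$; the bound $|F^{(j)}(u)|\leq C|u|^{\nu-j}$ is a nonnegative power of $|u|$ because $j\leq m\leq k\leq \nu$. Hölder combined with Gagliardo--Nirenberg interpolation $\|\partial^{\beta_l}u\|_{L^\star}\lesssim \|u\|_{L^q}^{1-|\beta_l|/m}\|\Lambda^m u\|_{L^p}^{|\beta_l|/m}$ then yields each summand. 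For the non-integer case $\gamma=m+\sigma$ with $m\in\{1,\ldots,k-1\}$ and $\sigma\in(0,1)$, apply $\Lambda^\sigma$ to the same expansion. The fractional Leibniz rule (Proposition \ref{prop fractional leibniz}) distributes the $\sigma$-derivative onto a single factor in each product. When $\Lambda^\sigma$ lands on $F^{(j)}(u)$, Proposition \ref{prop fractional chain} converts it into $\Lambda^\sigma u$ paired with $|u|^{\nu-j-1}$, using $|F^{(j+1)}(z)|\leq C|z|^{\nu-j-1}$ (available because $j+1\leq k$). When $\Lambda^\sigma$ lands on a derivative factor $\partial^{\beta_l}u$, use the interpolation $\|\Lambda^{\sigma+|\beta_l|}u\|_{L^\star}\lesssim \|u\|_{L^q}^{1-(\sigma+|\beta_l|)/\gamma}\|\Lambda^\gamma u\|_{L^p}^{(\sigma+|\beta_l|)/\gamma}$, and close out.

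The main obstacle is purely bookkeeping: every Hölder splitting must respect the scaling identity $\frac{1}{r}=\frac{1}{p}+\frac{\nu-1}{q}$, which is precisely what the Gagliardo--Nirenberg exponents $|\beta_l|/m$ and $(\sigma+|\beta_l|)/\gamma$ are designed to deliver, and this is where the hypotheses $\nu\geq k$ and $\gamma\leq k$ enter (keeping $|F^{(j)}(u)|$ a nonnegative power of $|u|$ throughout the induction). For the final assertion, a homogeneous polynomial of degree $\nu\in\mathbb{N}$ in $u$ and $\bar{u}$ is $C^\infty$ with $D^iF\equiv 0$ for $i>\nu$, so $|D^iF(z)|\leq C|z|^{\nu-i}$ holds trivially for every $i\in\mathbb{N}$; one may therefore take $k$ arbitrarily large, extending the estimate to all $\gamma\geq 0$. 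The detailed exponent accounting is worked out in the appendix of \cite{Kato95}.
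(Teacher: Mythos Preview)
Your sketch is correct and follows the standard route---Fa\`a di Bruno expansion, fractional chain rule on the outer factor, fractional Leibniz to distribute $\Lambda^\sigma$, and Gagliardo--Nirenberg to balance the exponents---which is precisely the argument the paper defers to by citing the appendix of \cite{Kato95} rather than giving its own proof. Your explicit observation that $F(0)=0$ is needed (implicit in the paper's intended application $F(z)=-\mu|z|^{\nu-1}z$) and your handling of the polynomial case by letting $k\to\infty$ are both on point.
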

\subsection{Proof of Theorem $\ref{theorem below d/2}$}
We are now able to prove Theorem $\ref{theorem below d/2}$. Let us firstly comment about the choice of $(p,q)$ given in $(\ref{define pq})$. It is easy to see that $(p,q)$ is admissible and $\gamma_{p,q}=0=\gamma_{p',q'}+4$. This allows us to use Strichartz estimate $(\ref{strichartz estimate})$ for $(p,q)$. Moreover, if we choose $(m,n)$ so that
\begin{align}
\frac{1}{p'}=\frac{1}{m}+\frac{\nu-1}{p}, \quad \frac{1}{q'}=\frac{1}{q}+\frac{\nu-1}{n}, \label{define mn}
\end{align}
Thanks to this choice of $n$, we have the Sobolev embedding $\dot{H}^\gamma_q \hookrightarrow L^n$ since
\[
q \leq n=\frac{dq}{d-\gamma q}.
\]
\textbf{Step 1.}  Existence. Let us consider 
\[
X:=\Big\{u \in L^p(I, H^\gamma_q) \ | \ \|u\|_{L^p(I,\dot{H}^\gamma_q)} \leq M
\Big\},
\]
equipped with the distance 
\[
d(u,v)=\|u-v\|_{L^p(I,L^q)},
\]
where $I=[0,T]$ and $M, T>0$ to be chosen later. It is easy to verify (see e.g. \cite{CazenaveWeissler} or Chapter 4 of  \cite{Cazenave}) that $(X,d)$ is a complete metric space. By the Duhamel formula, it suffices to prove that the functional 
\begin{align}
\Phi(u)(t)= e^{it\Delta^2} u_0 + i\mu \int_0^t e^{i(t-s)\Delta^2}|u(s)|^{\nu-1} u(s) ds=: u_{\text{hom}}(t)+ u_{\text{inh}}(t) \label{duhamel formula}
\end{align}
is a contraction on $(X,d)$. \newline
\indent Let us firstly consider the case $\gamma>\Gact$. In this case, we have $1<m<p$ and
\begin{align}
\frac{1}{m}-\frac{1}{p}=1-\frac{(\nu-1)(d-2\gamma)}{8}=:\theta>0. \label{subcritical}
\end{align}
Using Strichartz estimate $(\ref{strichartz estimate})$, we obtain
\begin{align*}
\|\Phi(u)\|_{L^p(I, \dot{H}^\gamma_q)} &\lesssim \|u_0\|_{\dot{H}^\gamma}+ \|F(u)\|_{L^{p'}(I,\dot{H}^\gamma_{q'})}, \\
\|\Phi(u)-\Phi(v)\|_{L^p(I,L^q)} &\lesssim \|F(u)-F(v)\|_{L^{p'}(I,L^{q'})},
\end{align*}
where $F(u)=|u|^{\nu-1}u$ and similarly for $F(v)$. It then follows from Lemma $\ref{lem nonlinear estimates}$, $(\ref{define mn})$, Sobolev embedding and $(\ref{subcritical})$ that
\begin{align}
\|F(u)\|_{L^{p'}(I,\dot{H}^\gamma_{q'})} &\lesssim T^\theta\|u\|^\nu_{L^p(I,\dot{H}^\gamma_q)}, \label{blowup subcritical}\\
\|F(u)-F(v)\|_{L^{p'}(I,L^{q'})} &\lesssim T^\theta \Big(\|u\|^{\nu-1}_{L^p(I,\dot{H}^\gamma_q)} + \|v\|^{\nu-1}_{L^p(I,\dot{H}^\gamma_q)}\Big)\|u-v\|_{L^p(I,L^q)}. \label{uniqueness subcritical}
\end{align}
This shows that for all $u, v \in X$, there exists $C>0$ independent of $T$ and $u_0 \in H^\gamma$ such that
\begin{align*}
\|\Phi(u)\|_{L^p(I,\dot{H}^\gamma_q)} &\leq C\|u_0\|_{\dot{H}^\gamma} +C T^\theta M^\nu, \\
d(\Phi(u),\Phi(v)) &\leq CT^\theta M^{\nu-1} d(u,v).
\end{align*} 
If we set $M=2C\|u_0\|_{\dot{H}^\gamma}$ and choose $T>0$ so that
\[
CT^\theta M^{\nu-1} \leq \frac{1}{2},
\]
then $\Phi$ is a strict contraction on $(X,d)$. \newline
\indent We now turn to the case $\gamma=\Gact$. We have from Strichartz estimate $(\ref{strichartz estimate})$ that
\[
\|u_{\text{hom}}\|_{L^p(I,\dot{H}^{\Gact}_q)} \lesssim \|u_0\|_{\dot{H}^{\Gact}}.
\]
This shows that $\|u_{\text{hom}}\|_{L^p(I,\dot{H}^{\Gact}_q)} \leq \varepsilon$ for some $\varepsilon>0$ small enough provided that $T$ is small or $\|u_0\|_{\dot{H}^{\Gact}}$ is small. We also have from $(\ref{strichartz estimate})$ that
\[
\|u_{\text{inh}}\|_{L^p(I,\dot{H}^{\Gact}_q)} \lesssim \|F(u)\|_{L^{p'}(I,\dot{H}^{\Gact}_{q'})}. 
\]
Lemma $(\ref{lem nonlinear estimates})$, $(\ref{define mn})$ and Sobolev embedding (note that in this case $m=p$) then yield that 
\begin{align}
\|F(u)\|_{L^{p'}(I,\dot{H}^{\Gact}_{q'})} &\lesssim \|u\|^\nu_{L^p(I,\dot{H}^{\Gact}_q)}, \label{blowup critical} \\
\|F(u)-F(v)\|_{L^{p'}(I,L^{q'})} &\lesssim \Big(\|u\|^{\nu-1}_{L^p(I,\dot{H}^{\Gact}_q)} + \|v\|^{\nu-1}_{L^p(I,\dot{H}^{\Gact}_q)}\Big) \|u-v\|_{L^p(I,L^q)}. \label{uniqueness critical}
\end{align}
This implies that for all $u, v \in X$, there exists $C>0$ independent of $T$ and $u_0 \in H^{\Gact}$ such that
\begin{align*}
\|\Phi(u)\|_{L^p(I,\dot{H}^{\Gact}_q)} &\leq \varepsilon +C M^\nu, \\
d(\Phi(u),\Phi(v)) &\leq CM^{\nu-1}d(u,v).
\end{align*}
If we choose $\varepsilon$ and $M$ small so that
\[
C M^{\nu-1} \leq \frac{1}{2}, \quad \varepsilon +\frac{M}{2}\leq M,
\]
then $\Phi$ is a contraction on $(X,d)$. \newline
\indent Therefore, in both sub-critical and critical cases, $\Phi$ has a unique fixed point in $X$. Moreover, since $u_0 \in H^\gamma$ and $u \in L^p(I,H^\gamma_q)$, the Strichartz estimate shows that $u \in C(I,H^\gamma)$ (see e.g. \cite{CazenaveWeissler} or Chapter 4 of \cite{Cazenave}). This shows the existence of solution $u \in C(I,H^\gamma)\cap L^p(I,H^\gamma_q)$ to (NL4S). Note that in the case $\gamma=\Gact$, if $\|u_0\|_{\dot{H}^{\Gact}}$ is small enough, then we can take $T=\infty$.\newline
\textbf{Step 2.} Uniqueness. It follows easily from $(\ref{uniqueness subcritical})$ and $(\ref{uniqueness critical})$ using the fact that $\|u\|_{L^p(I,\dot{H}^\gamma_q)}$ can be small if $T$ is small. \newline
\textbf{Step 3.} Item (i). Let $u \in C(I,H^\gamma) \cap L^p(I,H^\gamma_q)$ be a solution to (NLFS) where $I=[0,T]$ and $(a,b)$ an admissible pair with $b<\infty$ and $\gamma_{a,b}=0$. Then Strichartz estimate $(\ref{strichartz estimate})$ implies
\begin{align}
\|u\|_{L^a(I,L^b)} &\lesssim \|u_0\|_{L^2} + \|F(u)\|_{L^{p'}(I,L^{q'})}, \label{estimate 1} \\
\|u\|_{L^a(I,\dot{H}^\gamma_b)} &\lesssim \|u_0\|_{\dot{H}^\gamma}+ \|F(u)\|_{L^{p'}(I,\dot{H}^\gamma_{q'})}. \label{estimate 2}
\end{align}
It then follows from $(\ref{blowup subcritical})$ and $(\ref{blowup critical})$ that $u \in L^a(I,H^\gamma_b)$. \newline
\textbf{Step 4.} Item (ii) and (iii). The conservation of mass and energy follows similarly as for the Schr\"odinger equation (see e.g. \cite{CazenaveWeissler}, Chapter 4 of \cite{Cazenave} or Chapter 5 of \cite{Ginibre}). \newline
\textbf{Step 5.} Item (iv). The blowup alternative in sub-critical case is easy since the time of existence depends only on $\|u_0\|_{\dot{H}^\gamma}$. \newline
\textbf{Step 6.} Item (v). It also follows from a standard argument (see e.g. \cite{CazenaveWeissler}). Indeed, if $T^*<\infty$ and $\|u\|_{L^p([0,T^*), H^{\Gact}_q)} <\infty$, then Strichartz estimate $(\ref{strichartz estimate})$ implies that $u \in C([0,T^*],H^{\Gact})$. Thus, one can extend the solution to (NL4S) beyond $T^*$. It leads to a contradiction with the maximality of $T^*$. \newline
\textbf{Step 7.} Item (vi). We use the argument given in \cite{CazenaveWeissler}. From Step 1, in the sub-critical case, we can choose $T$ and $M$ so that the fixed point argument can be carried out on $X$ for any initial data with $\dot{H}^\gamma$ norm less than $2\|u_0\|_{\dot{H}^\gamma}$. In the critical case, there exist $T, M$ and an $\dot{H}^{\Gact}$ neighborhood $U$ of $u_0$ such that the fixed point argument can be carried out on $X$ for all initial data in $U$. Now let $u_{0,n}\rightarrow u_0$ in $H^\gamma$. In both sub-critical and critical cases, we see that $T<T^*(u_0)$, $\|u\|_{L^p([0,T], \dot{H}^\gamma_q)} \leq M$, and that for sufficiently large $n$, $T<T^*(u_{0,n})$ and $\|u_n\|_{L^p([0,T],\dot{H}^\gamma_q)} \leq M$. Thus, $(\ref{estimate 1})$ and $(\ref{estimate 2})$ together with $(\ref{blowup subcritical})$ and $(\ref{blowup critical})$ yield that $u_n$ is bounded in $L^a([0,T],H^\gamma_b)$ for any admissible pair $(a,b)$ with $b<\infty$ and $\gamma_{a,b}=0$. We also have from $(\ref{uniqueness subcritical})$, $(\ref{uniqueness critical})$ and the choice of $T$ that
\[
d(u_n,u) \leq C\|u_{0,n}-u_0\|_{L^2}+\frac{1}{2} d(u_n,u) \text{ or } d(u_n, u) \leq 2C\|u_{0,n}-u_0\|_{L^2}. 
\]
This shows that $u_n \rightarrow u$ in $L^p([0,T], L^q)$. Again $(\ref{estimate 2})$ together with $(\ref{uniqueness subcritical})$ and $(\ref{uniqueness critical})$ implies that $u_n \rightarrow u$ in $L^a([0,T],L^b)$ for any admissible pair $(a,b)$ with $b<\infty$ and $\gamma_{a,b}=0$. The convergence in $C(I,H^{\gamma-\ep})$ follows from the boundedness in $L^\infty(I,H^\gamma)$ and the convergence in $L^\infty(I,L^2)$ and that $\|u\|_{H^{\gamma-\ep}} \leq \|u\|^{1-\frac{\ep}{\gamma}}_{H^\gamma} \|u\|^{\frac{\ep}{\gamma}}_{L^2}$. \newline
\textbf{Step 8.} Item (vii). As mentioned in Step 1, when $\|u_0\|_{\dot{H}^{\Gact}}$ is small, we can take $T^*=\infty$. It remains to prove the scattering property. To do so, we make use of the adjoint estimate to the homogeneous Strichartz estimate, namely $L^2 \ni u_0 \mapsto e^{it\Delta^2} u_0 \in L^p(\R, L^q)$ to obtain
\begin{align}
\|e^{-it_2\Delta^2}u(t_2)-e^{-it_1\Delta^2} u(t_1)\|_{\dot{H}^{\Gact}} &= \Big\|i \mu \int_{t_1}^{t_2} e^{-is\Delta^2} F(u)(s) ds \Big\|_{\dot{H}^{\Gact}} \nonumber\\
&= \Big\|i \mu \int_{t_1}^{t_2} \Lambda^{\Gact}e^{-is\Delta^2} (\mathds{1}_{[t_1,t_2]}F(u))(s) ds \Big\|_{L^2} \nonumber \\
&\lesssim \|F(u)\|_{L^{p'}([t_1,t_2],\dot{H}^{\Gact}_{q'})}. \label{scattering 1}
\end{align}
Similarly,
\begin{align}
\|e^{-it_2\Delta^2}u(t_2)-e^{-it_1\Delta^2} u(t_1) \|_{L^2} \lesssim \|F(u)\|_{L^{p'}([t_1,t_2],L^{q'})}. \label{scattering 2}
\end{align}
Thanks to $(\ref{blowup critical})$ and $(\ref{uniqueness critical})$, we get
\[
\|e^{-it_2\Delta^2}u(t_2)-e^{-it_1\Delta^2} u(t_1)\|_{H^{\Gact}} \rightarrow 0,
\]
as $t_1, t_2 \rightarrow +\infty$. This implies that the limit
\[
u^+_0:=\lim_{t\rightarrow +\infty} e^{-it\Delta^2} u(t)
\]
exists in $H^{\Gact}$. Moreover, 
\[
u(t)-e^{it\Delta^2} u^+_0 = -i\mu \int_t^{+\infty} e^{i(t-s)\Delta^2} F(u(s)) ds.
\]
Using again $(\ref{scattering 1})$ and $(\ref{scattering 2})$ together with $(\ref{blowup critical})$ and $(\ref{uniqueness critical})$, we have
\[
\lim_{t\rightarrow +\infty} \|u(t)-e^{it\Delta^2} u^+_0\|_{H^{\Gact}}=0. 
\] 
This completes the proof of Theorem $\ref{theorem below d/2}$. \defendproof
\subsection{Proof of Theorem $\ref{theorem d/2}$}
We now turn to the proof of the local well-posedness in $H^{d/2}$. To do so, we firstly choose $p>\max(\nu-1,4)$ when $d=1$ and $p>\max(\nu-1,2)$ when $d\geq 2$ and then choose $q \in [2,\infty)$ such that 
\[
\frac{2}{p} +\frac{d}{q}\leq \frac{d}{2}.
\]
\textbf{Step 1.} Existence. We will show that $\Phi$ defined in $(\ref{duhamel formula})$ is a contraction on
\[
X:= \Big\{ u \in L^\infty(I,H^{d/2}) \cap L^p(I, H^{d/2-\gamma_{p,q}}_q) \  | \ \|u\|_{L^\infty(I,H^{d/2})}+\|u\|_{L^p(I,H^{d/2-\gamma_{p,q}}_q)} \leq M \Big\},
\]
equipped with the distance
\[
d(u,v):= \|u-v\|_{L^\infty(I,L^2)} +\|u-v\|_{L^p(I,H^{-\gamma_{p,q}})},
\]
where $I=[0,T]$ and $M,T>0$ to be determined.  The local Strichartz estimate $(\ref{local strichartz})$ gives
\begin{align*}
\|\Phi(u)\|_{L^\infty(I,H^{d/2})} +\|\Phi(u)\|_{L^p(I,H^{d/2-\gamma_{p,q}}_q)} &\lesssim \|u_0\|_{H^{d/2}} + \|F(u)\|_{L^1(I,H^{d/2})},\\
\|\Phi(u)-\Phi(v)\|_{L^\infty(I,L^2)} + \|\Phi(u)-\Phi(v)\|_{L^p(I,H^{-\gamma_{p,q}}_q)} &\lesssim \|F(u)-F(v)\|_{L^1(I,L^2)}.
\end{align*}
Thanks to the assumptions on $\nu$, Lemma $\ref{lem nonlinear estimates}$ implies
\begin{align}
\|F(u)\|_{L^1(I,H^{d/2})} &\lesssim \|u\|^{\nu-1}_{L^{\nu-1}(I,L^\infty)} \|u\|_{L^\infty(I,H^{d/2})} \lesssim T^\theta \|u\|^{\nu-1}_{L^p(I,L^\infty)} \|u\|_{L^\infty(I,H^{d/2})}, \label{blowup d/2} \\
\|F(u)-F(v)\|_{L^1(I,L^2)} &\lesssim \Big(\|u\|^{\nu-1}_{L^{\nu-1}(I,L^\infty)} + \|v\|^{\nu-1}_{L^{\nu-1}(I,L^\infty)} \Big) \|u-v\|_{L^\infty(I,L^2)}  \nonumber \\
&\lesssim T^\theta \Big(\|u\|^{\nu-1}_{L^p(I,L^\infty)} + \|v\|^{\nu-1}_{L^p(I,L^\infty)} \Big) \|u-v\|_{L^\infty(I,L^2)}, \label{uniqueness d/2}
\end{align}
where $\theta=1-\frac{\nu-1}{p}>0$. Using the fact that $d/2-\gamma_{p,q}>d/q$, the Sobolev embedding implies $H^{d/2-\gamma_{p,q}}_q \hookrightarrow L^\infty$. Thus,
\begin{align*}
\|\Phi(u)\|_{L^\infty(I,H^{d/2})} +\|\Phi(v)\|_{L^p(I,H^{d/2-\gamma_{p,q}}_q)} &\lesssim \|u_0\|_{H^{d/2}} + T^\theta \|u\|^{\nu-1}_{L^p(I,H^{d/2-\gamma_{p,q}}_q)} \|u\|_{L^\infty(I,H^{d/2})}, \\
d(\Phi(u),\Phi(v)) &\lesssim  T^\theta \Big(\|u\|^{\nu-1}_{L^p(I,H^{d/2-\gamma_{p,q}}_q)} + \|v\|^{\nu-1}_{L^p(I,H^{d/2-\gamma_{p,q}}_q)}\Big) d(u,v).
\end{align*}
Thus for all $u, v \in X$, there exists $C>0$ independent of $u_0 \in H^{d/2}$ such that
\begin{align*}
\|\Phi(u)\|_{L^\infty(I,H^{d/2})} +\|\Phi(v)\|_{L^p(I,H^{d/2-\gamma_{p,q}}_q)} &\leq C \|u_0\|_{H^{d/2}} +CT^\theta M^\nu, \\
d(\Phi(u),\Phi(v)) &\leq CT^\theta M^{\nu-1} d(u,v).
\end{align*}
If we set $M=2C\|u_0\|_{H^{d/2}}$ and choose $T>0$ small enough so that $CT^\theta M^{\nu-1} \leq \frac{1}{2}$, then $\Phi$ is a contraction on $X$. \newline
\textbf{Step 2.} Uniqueness. It is easy using $(\ref{uniqueness d/2})$ since $\|u\|_{L^p(I,L^\infty)}$ is small if $T$ is small. \newline
\textbf{Step 3.} Item (i). It follows easily from Step 1 and Strichartz estimate $(\ref{local strichartz})$ that for any admissible pair $(a,b)$ with $b<\infty$ and $\gamma_{a,b}=0$,
\[
\|u\|_{L^a(I,H^{d/2}_b)} \lesssim \|u_0\|_{H^{d/2}}+ \|F(u)\|_{L^1(I,H^{d/2})}.
\] 
\textbf{Step 4.} Item (ii). The blowup alternative is obvious since the time of existence depends only on $\|u_0\|_{H^{d/2}}$. \newline
\textbf{Step 5.} Item (iii). The continuous dependence is similar to Step 7 of the proof of Theorem $\ref{theorem below d/2}$ using $(\ref{uniqueness d/2})$.
\defendproof
\begin{rem} \label{rem continuity d/2}
If we assume that $\nu>1$ is an odd integer or $\lceil d/2 \rceil \leq \nu-1$ otherwise, then the continuous dependence holds in $C(I,H^{d/2})$. Indeed, we consider $X$ as above equipped with the following metric
\[
d(u,v):= \|u-v\|_{L^\infty(I,H^{d/2})} + \|u-v\|_{L^p(I,H^{d/2-\gamma_{p,q}}_q)}.
\]
Thanks to the assumptions on $\nu$, we are able to apply the fractional derivatives  estimates (see e.g. the appendix of \cite{Kato95} or Corollary 3.5 of \cite{Dinh}) to have
\begin{multline*}
\|F(u)-F(v)\|_{L^1(I,H^{d/2})} \lesssim (\|u\|^{\nu-1}_{L^{\nu-1}(I,L^\infty)} 
+ \|v\|^{\nu-1}_{L^{\nu-1}(I,L^\infty)}) \|u-v\|_{L^\infty(I,H^{d/2})} \\
+ (\|u\|^{\nu-2}_{L^{\nu-1}(I,L^\infty)} + \|v\|^{\nu-2}_{L^{\nu-1}(I,L^\infty)})(\|u\|_{L^\infty(I,H^{d/2})} +\|v\|_{L^\infty(I,H^{d/2})}) \|u-v\|_{L^{\nu-1}(I,L^\infty)}.
\end{multline*}
The Sobolev embedding then implies that for all $u,v \in X$,
\[
d(\Phi(u),\Phi(v)) \lesssim T^\theta M^{\nu-1} d(u,v).
\]
The continuous dependence in $C(I,H^{d/2})$ follows as Step 7 of the proof of Theorem $\ref{theorem below d/2}$.
\end{rem}
\section{Regularity} \label{section regularity}
\setcounter{equation}{0}
The main purpose of this section is to prove the regularity of solutions of (NL4S) given in Theorem $\ref{theorem regularity}$. We follow the argument given in Chapter 5 of \cite{Cazenave}. To do so, we will split $\gamma$ into three cases $\gamma \in [0,d/2)$, $\gamma=d/2$ and $\gamma>d/2$. 
\subsection{The case $\gamma\in [0,d/2)$}
Let $\beta >\gamma$. If $u_0 \in H^\beta$, then Theorem $\ref{theorem below d/2}$ or Theorem $\ref{theorem d/2}$ or Theorem $\ref{theorem above d/2}$ shows that there exists a maximal solution to (NL4S) satisfying $u \in C([0,T),H^\beta) \cap L^a_{\text{loc}}([0,T),H^\beta_b)$ for any admissible pair $(a,b)$ with $b<\infty$ and $\gamma_{a,b}=0$. Since $H^\beta$-solution is in particular an $H^\gamma$-solution, the uniqueness implies that $T \leq T^*$. We will show that $T$ is actually equal to $T^*$. Suppose that $T<T^*$, then the blowup alternative implies
\begin{align}
\|u(t)\|_{H^\beta} \rightarrow \infty \text{ as } t\rightarrow T. \label{regularity 1}
\end{align}
Moreover, since $T<T^*$, we have
\[
\|u\|_{L^p((0,T),H^\gamma_q)} + \sup_{0\leq t\leq T} \|u(t)\|_{H^\gamma} <\infty,
\]
where $(p,q)$ given in $(\ref{define pq})$. Using Strichartz estimate $(\ref{strichartz estimate})$, we have for any interval $I \subset (0,T)$,
\begin{align*}
\|u\|_{L^\infty(I,L^2)}+\|u\|_{L^p(I,L^q)} &\lesssim \|u_0\|_{L^2}+ \|F(u)\|_{L^{p'}(I,L^{q'})}, \\
\|u\|_{L^\infty(I,\dot{H}^\beta)} +\|u\|_{L^p(I,\dot{H}^\beta_q)} &\lesssim \|u_0\|_{\dot{H}^\beta} + \|F(u)\|_{L^{p'}(I,\dot{H}^\beta_{q'})}.
\end{align*}
Now, let $(m,n)$ be as in $(\ref{define mn})$. Lemma $\ref{lem nonlinear estimates}$, $(\ref{define mn})$ and Sobolev embedding then give
\begin{align*}
\|F(u)\|_{L^{p'}(I,L^{q'})} &\lesssim \|u\|^{\nu-1}_{L^p(I,L^n)} \|u\|_{L^m(I,L^q)} \lesssim \|u\|^{\nu-1}_{L^p(I,\dot{H}^\gamma_q)} \|u\|_{L^m(I,L^q)} \lesssim \|u\|_{L^m(I,L^q)}, \\
\|F(u)\|_{L^{p'}(I,\dot{H}^\beta_{q'})} &\lesssim \|u\|^{\nu-1}_{L^p(I,L^n)} \|u\|_{L^m(I,\dot{H}^\beta_q)} \lesssim \|u\|^{\nu-1}_{L^p(I,\dot{H}^\gamma_q)} \|u\|_{L^m(I,\dot{H}^\beta_q)} \lesssim \|u\|_{L^m(I,\dot{H}^\beta_q)}.
\end{align*}
Here we use the fact that $\|u\|_{L^p((0,T),H^\gamma_q)}$ is bounded. This shows that
\[
\|u\|_{L^\infty(I,H^\beta)} + \|u\|_{L^p(I,H^\beta_q)} \lesssim \|u_0\|_{H^\beta}+ \|u\|_{L^m(I,H^\beta_q)},
\]
for every interval $I \subset (0,T)$. Now let $0<\ep<T$ and consider $I=(0,\tau)$ with $\ep<\tau<T$. We have 
\[
\|u\|_{L^m(I,H^\beta_q)} \leq \|u\|_{L^m((0,\tau-\ep),H^\beta_q)} +\|u\|_{L^m((\tau-\ep,\tau),H^\beta_q)} \leq C_\ep + \ep^\theta \|u\|_{L^p(I,H^\beta_q)},
\]
where $\theta$ given in $(\ref{subcritical})$. Here we also use the fact that $u \in L^p_{\text{loc}}([0,T),H^\beta_q)$ since $\gamma_{p,q}=0$. Thus,
\[
\|u\|_{L^\infty(I,H^\beta)} + \|u\|_{L^p(I,H^\beta_q)} \leq C + C_\ep + \ep^\theta C \|u\|_{L^p(I,H^\beta_q)},
\]
where the various constants are independent of $\tau<T$. By choosing $\ep$ small enough, we have
\[
\|u\|_{L^\infty(I,H^\beta)}+\|u\|_{L^p(I, H^\beta_q)} \leq C,
\]
where $C$ is independent of $\tau<T$. Let $\tau \rightarrow T$, we get a contradiction with $(\ref{regularity 1})$. 
\subsection{The case $\gamma=d/2$}
Since $u_0 \in H^{d/2}$, Theorem $\ref{theorem d/2}$ shows that there exists a unique, maximal solution to (NL4S) satisfying $u \in C([0,T^*), H^{d/2}) \cap L^p_{\text{loc}}([0,T^*),L^\infty)$ for some $p>\max(\nu-1,4)$ when $d=1$ and $p>\max(\nu-1,2)$ when $d\geq 2$. This implies in particular that
\begin{align}
u \in L^{\nu-1}_{\text{loc}}([0,T^*), L^\infty). \label{regularity 2}
\end{align}
Now let $\beta>\gamma$. If $u_0 \in H^\beta$, then we know that $u$ is an $H^\beta$ solution defined on some maximal interval $[0,T)$ with $T\leq T^*$. Suppose that $T<T^*$. Then the unitary property of $e^{it\Delta^2}$ and Lemma $\ref{lem nonlinear estimates}$ imply that
\[
\|u(t)\|_{H^\beta} \leq \|u_0\|_{H^\beta} + \int_0^t \|F(u)(s)\|_{H^\beta}ds \leq \|u_0\|_{H^\beta}+ C\int_0^t \|u(s)\|^{\nu-1}_{L^{\infty}} \|u(s)\|_{H^\beta}ds,
\]
for all $0\leq t<T$. The Gronwall's inequality then yields
\[
\|u(t)\|_{H^\beta} \leq \|u_0\|_{H^\beta} \exp \Big( C\int_0^t \|u(s)\|^{\nu-1}_{L^\infty} ds\Big)
\]
for all $0\leq t<T$. Using $(\ref{regularity 2})$, we see that $\limsup \|u(t)\|_{H^\beta}<\infty$ as $t\rightarrow T$. This is a contradiction with the blowup alternative in $H^\beta$.
\subsection{The case $\gamma>d/2$}
Let $\beta >\gamma$. If $u_0 \in H^\beta$, then Theorem $\ref{theorem above d/2}$ shows that there is a unique maximal solution $u \in C([0,T), H^\beta)$ to (NL4S). By the uniqueness, we have $T\leq T^*$. Suppose $T<T^*$. Then 
\[
\sup_{0\leq t\leq T} \|u(t)\|_{H^\beta} <\infty,
\]
and hence
\[
\sup_{0\leq t \leq T} \|u(t)\|_{L^\infty} <\infty.
\]
This is a contradiction with the fact that $\limsup \|u(t)\|_{L^\infty}=\infty$ as $t\rightarrow T$. The proof of Theorem $\ref{theorem regularity}$ is now complete. \defendproof
\section{Ill-posedness} \label{section ill-posedness}
\setcounter{equation}{0}
In this section, we will give the proof of Theorem $\ref{theorem ill-posedness}$ using the technique of \cite{ChristCollianderTao}. We follow closely the argument of \cite{Dinhhalfwave}. Let us start with the small dispersion analysis.
\subsection{Small dispersion analysis}
Let us consider for $0<\delta \ll 1$ the following equation
\begin{align}
\left\{
\begin{array}{rl}
i\partial_t \phi(t,x) + \delta^4\Delta^2 \phi(t,x)&=-\mu |\phi|^{\nu-1} \phi(t,x), \quad (t, x) \in \R \times \R^d, \\
\phi(0,x) &= \phi_0(x), \quad x\in \R^d.
\end{array}
\right.
\label{small dispersion}
\end{align}
Note that $(\ref{small dispersion})$ can be transformed back to (NL4S) by using
\begin{align}
u(t,x):= \phi(t,\delta x). \label{back transformation}
\end{align}
\begin{lem} \label{lem small dispersion analysis}
Let $k>d/2$ be an integer. If $\nu$ is not an odd integer, then we assume also the additional regularity condition $\nu \geq k+1$. Let $\phi_0$ be a Schwartz function. Then there exists $C, c>0$ such that if $0<\delta\leq c$ sufficiently small, then there exists a unique solution $\phi^{(\delta)} \in C([-T,T],H^k)$ of $(\ref{small dispersion})$ with $T=c|\log \delta|^c$ satisfying
\begin{align}
\|\phi^{(\delta)}(t)-\phi^{(0)}(t)\|_{H^k} \leq C\delta^{3}, \label{small dispersion estimate}
\end{align}
for all $|t|\leq c|\log \delta|^c$, where 
\[
\phi^{(0)}(t,x):= \phi_0(x) \exp(-i\mu t|\phi_0(x)|^{\nu-1})
\]
is the solution of $(\ref{small dispersion})$ with $\delta=0$.
\end{lem}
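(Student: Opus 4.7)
The plan is to track the error $w := \phi^{(\delta)} - \phi^{(0)}$ perturbatively. Subtracting the equation for $\phi^{(0)}$ (which, since $|\phi^{(0)}|=|\phi_0|$ is $t$-independent, reduces to the pointwise ODE $i\partial_t\phi^{(0)}=-\mu|\phi^{(0)}|^{\nu-1}\phi^{(0)}$ admitting the stated explicit solution) from the equation for $\phi^{(\delta)}$, I obtain
\[
i\partial_t w + \delta^4\Delta^2 w \;=\; -\delta^4\Delta^2\phi^{(0)} \;-\; \mu\bigl[|\phi^{(0)}+w|^{\nu-1}(\phi^{(0)}+w) - |\phi^{(0)}|^{\nu-1}\phi^{(0)}\bigr],\qquad w(0)=0.
\]
The skew-adjointness of $\delta^4\Delta^2$ annihilates it in any $H^k$-energy identity, leaving a small forcing of size $\delta^4$ and a nonlinear piece that vanishes at $w=0$. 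A bootstrap/Gronwall argument should therefore close on the desired timescale.

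Three preliminary ingredients are needed. First, local existence of $\phi^{(\delta)}$ in $H^k$ for any integer $k>d/2$: since $\psi(t,x):=\phi^{(\delta)}(t,\delta x)$ solves (NL4S) with Schwartz datum $\phi_0(\delta\,\cdot\,)\in H^k$, Theorem~\ref{theorem above d/2} produces a maximal $H^k$-solution that persists as long as $\|\phi^{(\delta)}(t)\|_{H^k}$ stays bounded. Second, polynomial-in-$t$ Sobolev estimates
\[
\|\phi^{(0)}(t)\|_{H^j} \;\leq\; C_j\langle t\rangle^j,\qquad j=0,1,\dots,k+4,
\]
obtained by direct differentiation of the explicit formula: each derivative either falls on the Schwartz factor $\phi_0$ or pulls out a factor $t\cdot\partial(|\phi_0|^{\nu-1})$, the regularity of $|\phi_0|^{\nu-1}$ up to order $k+4$ being guaranteed by the hypothesis $\nu\geq k+1$ (or $\nu$ an odd integer). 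Third, a Moser/algebra estimate, obtained from Lemma~\ref{lem nonlinear estimates} (again using $\nu\geq k+1$) and the embedding $H^k\hookrightarrow L^\infty$, of the form
\[
\|F(\phi^{(0)}+w)-F(\phi^{(0)})\|_{H^k} \;\leq\; C\bigl(\|\phi^{(0)}\|_{H^k}+\|w\|_{H^k}\bigr)^{\nu-1}\|w\|_{H^k}.
\]

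Pairing the equation for $w$ with $\Lambda^{2k}\bar w$ in $L^2$ and taking the imaginary part (in which the $\delta^4\Delta^2$ contribution drops) yields, as long as $\|w\|_{H^k}\leq 1$,
\[
\frac{d}{dt}\|w(t)\|_{H^k} \;\leq\; C\delta^4\langle t\rangle^{k+4} + C\langle t\rangle^{k(\nu-1)}\|w(t)\|_{H^k}.
\]
Gronwall's inequality then gives
\[
\|w(t)\|_{H^k} \;\leq\; C\delta^4\langle t\rangle^{k+5}\exp\!\bigl(C\langle t\rangle^{k(\nu-1)+1}\bigr).
\]
Choosing $c>0$ with $c(k(\nu-1)+1)<1$, one checks that for $|t|\leq c|\log\delta|^c$ and $\delta\leq c$ small enough, the exponential factor is bounded by $\delta^{-1/2}$ while the polynomial prefactor is $O(|\log\delta|^{(k+5)c})$. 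Hence $\|w(t)\|_{H^k}\leq C\delta^{7/2}|\log\delta|^{(k+5)c}\leq C\delta^3$, which strictly improves the bootstrap hypothesis $\|w\|_{H^k}\leq 1$ and, via the blowup alternative of Theorem~\ref{theorem above d/2}, extends $\phi^{(\delta)}$ to $[-T,T]$.

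The main technical obstacle is the competition inside the Gronwall factor between the polynomial $t$-growth of $\|\phi^{(0)}\|_{H^k}$ (exponent $k$) and the $(\nu-1)$-fold product in the Moser estimate, which produces the exponent $k(\nu-1)+1$; the timescale $|\log\delta|^c$ with $c<1/(k(\nu-1)+1)$ is precisely what keeps $\exp(C\langle T\rangle^{k(\nu-1)+1})$ sub-algebraic in $\delta^{-1}$ and lets the $\delta^4$ seed survive as $\delta^3$. The smoothness hypothesis $\nu\geq k+1$ is the minimum regularity needed to justify both the differentiation of $|\phi_0|^{\nu-1}$ to order $k+4$ and the $H^k$ Moser estimate; without it the $H^k$-energy method would not be available in the first place.
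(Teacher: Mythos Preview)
Your argument is precisely the energy/bootstrap method of Christ--Colliander--Tao (adapted to the fourth-order dispersion as in Pausader's Lemma~4.1), which is exactly what the paper's proof invokes. One small slip: the hypothesis $\nu\geq k+1$ only ensures $|\phi_0|^{\nu-1}\in C^{k}$, not $C^{k+4}$, so the bound $\|\phi^{(0)}(t)\|_{H^{k+4}}\leq C\langle t\rangle^{k+4}$ is not justified as written for a general Schwartz $\phi_0$ with zeros; this is harmless in the applications (where $\phi_0$ may be taken nonvanishing, making $|\phi_0|^{\nu-1}$ smooth), and the same subtlety is already implicit in the references the paper cites.
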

\begin{proof}
We refer to Lemma 2.1 of \cite{ChristCollianderTao}, where the small dispersion analysis is invented to prove the ill-posedness for the nonlinear Schr\"odinger equation. The proof of Lemma $\ref{lem small dispersion analysis}$ is essentially given in Lemma 4.1 of \cite{Pausadercubic}, where the author treated the cubic fourth-order Schr\"odinger equation. The extension to the general power-type nonlinearity here is completely similar. Note that $H^k$ with $k>d/2$ is an algebra. 
\end{proof}
\begin{rem} \label{rem small dispersion analysis}
By the same argument as in \cite{ChristCollianderTao}, we can get the following better estimate
\begin{align}
\|\phi^{(\delta)}(t)-\phi^{(0)}(t)\|_{H^{k,k}} \leq C \delta^3, \label{small dispersion estimate weighted}
\end{align}
for all $|t| \leq c|\log \delta|^c$, where $H^{k,k}$ is the weighted Sobolev space
\[
\|\phi\|_{H^{k,k}}:= \sum_{|\alpha|=0}^{k} \|\scal{x}^{k-|\alpha|} D^\alpha \phi\|_{L^2}. 
\]
\end{rem}
Now let $\lambda>0$ and set
\begin{align}
u^{(\delta, \lambda)}(t,x):= \lambda^{-\frac{4}{\nu-1}} \phi^{(\delta)}(\lambda^{-4}t, \lambda^{-1} \delta x). \label{define solution}
\end{align}
It is easy to see that $u^{(\delta,\lambda)}$ is a solution of (NL4S) with initial data $u^{(\delta,\lambda)}(0)= \lambda^{-\frac{4}{\nu-1}} \phi_0(\lambda^{-1}\delta x)$. We have the following estimate of the initial data $u^{(\delta,\lambda)}(0)$.
\begin{lem} \label{lem initial data estimate}
Let $\gamma \in \R$ and $0<\lambda \leq \delta \ll 1$. Let $\phi_0 \in \Sh$ be such that if $\gamma \leq -d/2$,
\[
\hat{\phi}_0(\xi) = O(|\xi|^\kappa) \text{ as } \xi \rightarrow 0,
\]
for some $\kappa >-\gamma-d/2$, where $\hat{\cdot}$ is the Fourier transform. Then there exists $C>0$ such that 
\begin{align}
\|u^{(\delta,\lambda)}(0)\|_{H^\gamma} \leq C \lambda^{\Gace-\gamma} \delta^{\gamma-d/2}. \label{initial data estimate}
\end{align}
\end{lem}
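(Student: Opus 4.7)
The argument is a direct Fourier-analytic calculation exploiting the pure spatial scaling structure of $u^{(\delta,\lambda)}(0,x)=\lambda^{-\frac{4}{\nu-1}}\phi_0(\mu x)$ with $\mu:=\delta/\lambda \geq 1$. First I would compute the Fourier transform, namely
\[
\widehat{u^{(\delta,\lambda)}(0)}(\xi) = \lambda^{-\frac{4}{\nu-1}}\mu^{-d}\,\hat{\phi}_0(\xi/\mu),
\]
and then change variables $\eta=\xi/\mu$ in the $H^\gamma$ integral to obtain
\[
\|u^{(\delta,\lambda)}(0)\|_{H^\gamma}^2 = \lambda^{-\frac{8}{\nu-1}}\mu^{-d}\int_{\R^d}(1+\mu^2|\eta|^2)^\gamma|\hat{\phi}_0(\eta)|^2\,d\eta.
\]

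Next I would extract the $\mu$-dependence of the Japanese bracket. Since $\mu\geq 1$, for $\gamma\geq 0$ one has $(1+\mu^2|\eta|^2)^\gamma\leq \mu^{2\gamma}(1+|\eta|^2)^\gamma$, so the integral is bounded by $\mu^{2\gamma}\|\phi_0\|_{H^\gamma}^2$. For $\gamma<0$ one uses $1+\mu^2|\eta|^2\geq \mu^2|\eta|^2$ to get $(1+\mu^2|\eta|^2)^\gamma\leq \mu^{2\gamma}|\eta|^{2\gamma}$, so the integral is bounded by $\mu^{2\gamma}\|\phi_0\|_{\dot{H}^\gamma}^2$. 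In either case
\[
\|u^{(\delta,\lambda)}(0)\|_{H^\gamma}^2 \lesssim \lambda^{-\frac{8}{\nu-1}}\mu^{2\gamma-d}\,\big(\|\phi_0\|_{H^\gamma}+\|\phi_0\|_{\dot{H}^\gamma}\big)^2.
\]

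Substituting $\mu=\delta/\lambda$ and the definition $\Gace=\tfrac{d}{2}-\tfrac{4}{\nu-1}$ produces exactly
\[
\lambda^{-\frac{8}{\nu-1}}(\delta/\lambda)^{2\gamma-d} = \lambda^{2(\Gace-\gamma)}\delta^{2(\gamma-d/2)},
\]
giving the desired estimate $(\ref{initial data estimate})$, provided that $\|\phi_0\|_{\dot{H}^\gamma}$ is finite in the negative-regularity range.

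The one nontrivial point—where the hypothesis on $\hat{\phi}_0$ at the origin enters—is the finiteness of $\|\phi_0\|_{\dot H^\gamma}$ when $\gamma<0$. Since $\phi_0\in\Sh$, $|\hat\phi_0(\eta)|^2$ decays faster than any polynomial at infinity, so the integral $\int|\eta|^{2\gamma}|\hat\phi_0(\eta)|^2d\eta$ is harmless at infinity and I only need to check convergence near $\eta=0$. For $-d/2<\gamma<0$ the factor $|\eta|^{2\gamma}$ is already locally integrable in $\R^d$ (using polar coordinates, $2\gamma+d-1>-1$), so no extra assumption is needed. For $\gamma\leq -d/2$, the stated assumption $\hat\phi_0(\xi)=O(|\xi|^\kappa)$ with $\kappa>-\gamma-d/2$ makes the integrand $O(|\eta|^{2\gamma+2\kappa})$ near the origin, and $2(\gamma+\kappa)+d>0$ ensures integrability. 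This vanishing assumption is exactly the expected obstruction at $\gamma=-d/2$, and its role here is precisely to control the low-frequency contribution of $\hat{\phi}_0$.
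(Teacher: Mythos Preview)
Your argument is correct and is precisely the standard Fourier-side scaling computation that the paper defers to (it cites Section~4 of \cite{ChristCollianderTao} and Lemma~3.3 of \cite{Dinhhalfwave} rather than writing out a proof). One cosmetic remark: avoid the symbol $\mu$ for the dilation factor $\delta/\lambda$, since in this paper $\mu\in\{\pm 1\}$ already denotes the sign of the nonlinearity.
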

The proof of this result follows the same lines as in Section 4 of \cite{ChristCollianderTao} for the nonlinear Schr\"odinger equation. We also refer to Lemma 3.3 of \cite{Dinhhalfwave} for the nonlinear half-wave context.
\subsection{Proof of Theorem $\ref{theorem ill-posedness}$}
We now give the proof of Theorem $\ref{theorem ill-posedness}$. We only consider the case $t\geq 0$, the one for $t<0$ is similar. Let $\ep \in (0,1]$ be fixed and set
\begin{align}
\lambda^{\Gact-\gamma} \delta^{\gamma-d/2} =:\ep, \label{define epsilon ill-posedness}
\end{align}
equivalently
\[
\lambda=\delta^\theta, \text{ where } \theta=\frac{d/2-\gamma}{\Gact-\gamma}>1,
\]
hence  $0 < \lambda \leq \delta \ll 1$. We note that we are considering the super-critical range, i.e. $\gamma <\Gact$. We will split the proof into several cases.
\paragraph{The case $0<\gamma<\Gact$.} We firstly have from Lemma $\ref{lem initial data estimate}$ and $(\ref{define epsilon ill-posedness})$ that 
\[
\|u^{(\delta,\lambda)}(0)\|_{H^\gamma} \leq C\ep.
\]
Since the support of $\phi^{(0)}(t,x)$ is independent of $t$, we see that for $t$ large enough, depending on $\gamma$,
\[
\|\phi^{(0)}(t)\|_{H^\gamma} \sim t^\gamma,
\]
whenever $\gamma \geq 0$ provided either $\nu>1$ is an odd integer or $\gamma \leq \nu-1$ otherwise. Thus for $\delta \ll 1$ and $1\ll t \leq c|\log \delta|^c$, $(\ref{small dispersion estimate})$ implies
\begin{align}
\|\phi^{(\delta)}(t)\|_{H^\gamma} \sim t^\gamma. \label{solution estimate}
\end{align}
Next, using  
\[
[u^{(\delta,\lambda)}(\lambda^4 t)]\hat{\ }(\xi)= \lambda^{-\frac{4}{\nu-1}} (\lambda\delta^{-1})^d [\phi^{(\delta)}(t)]\hat{\ } (\lambda\delta^{-1}\xi),
\]
we have
\begin{align*}
\|u^{(\delta,\lambda)}(\lambda^4 t)\|^2_{H^\gamma} &= \int (1+|\xi|^2)^\gamma |[u^{(\delta,\lambda)}(\lambda^4 t)]\hat{\ } (\xi)|^2 d\xi \\
&= \lambda^{-\frac{8}{\nu-1}} (\lambda\delta^{-1})^d \int (1+|\lambda^{-1}\delta\xi|^2)^\gamma |[\phi^{(\delta)}(t)]\hat{\ } (\xi)|^2 d\xi \\
&\geq \lambda^{-\frac{8}{\nu-1}} (\lambda\delta^{-1})^{d-2\gamma} \int_{|\xi|\geq 1} |\xi|^{2\gamma} |[\phi^{(\delta)}(t)]\hat{\ }(\xi)|^2 d\xi \\
&\geq \lambda^{-\frac{8}{\nu-1}} (\lambda\delta^{-1})^{d-2\gamma} \Big(c\|\phi^{(\delta)}(t)\|^2_{H^\gamma} - C\|\phi^{(\delta)}(t)\|^2_{L^2} \Big).
\end{align*}
We also have from $(\ref{solution estimate})$ that $\|\phi^{(\delta)}(t)\|_{L^2} \ll \|\phi^{(\delta)}(t)\|_{H^\gamma}$ for $t \gg 1$. This yields that
\[
\|u^{(\delta,\lambda)}(\lambda^4t)\|_{H^\gamma} \geq c \lambda^{-\frac{4}{\nu-1}} (\lambda\delta^{-1})^{d/2-\gamma} \|\phi^{(\delta)}(t)\|_{H^\gamma} \geq c \ep t^\gamma,
\]
for $1\ll t \leq c|\log \delta|^c$. We now choose $t=c|\log \delta|^c$ and pick $\delta>0$ small enough so that
\[
\ep t^\gamma > \ep^{-1}, \quad \lambda^4 t < \ep.
\]
Therefore, for any $\varepsilon>0$, there exists a solution of (NL4S) satisfying 
\[
\|u(0)\|_{H^\gamma} <\varepsilon, \quad \|u(t)\|_{H^\gamma}>\varepsilon^{-1}
\] 
for some $t \in (0,\varepsilon)$. Thus for any $t>0$, the solution map $\Sh \ni u(0) \mapsto u(t)$ for the Cauchy problem (NL4S) fails to be continuous at 0 in the $H^\gamma$-topology.

\paragraph{The case $\gamma \leq -d/2$ and $\gamma<\Gact$.} 
Using again Lemma \ref{lem initial data estimate} and $(\ref{define epsilon ill-posedness})$, we have 
\[
\|u^{(\delta,\lambda)}(0)\|_{H^\gamma} \leq C\ep,
\]
provided $0<\lambda\leq \delta \ll 1$ and $\phi_0 \in \Sh$ satisfying
\[
\hat{\phi}_0(\xi) = O(|\xi|^\kappa) \text{ as } \xi \rightarrow 0,
\]
for some $\kappa >-\gamma-d/2$. We recall that
\[
\phi^{(0)}(t,x)=\phi_0(x) \exp(-i\mu t|\phi_0(x)|^{\nu-1}).
\]
It is clear that we can choose $\phi_0$ so that
\[
\Big|\int \phi^{(0)}(1,x) dx \Big| \geq c \text{ or } |[\phi^{(0)}(1)]\hat{\ }(0)| \geq c,
\]
for some constant $c>0$. Since $\phi^{(0)}(1)$ is rapidly decreasing, the continuity implies that
\[
|[\phi^{(0)}(1)]\hat{\ } (\xi)| \geq c,
\]
for $|\xi| \leq c$ with $0<c \ll 1$. Since $H^{k,k}$ controls $L^1$ when $k>d/2$, $(\ref{small dispersion estimate weighted})$ implies
\[
|[\phi^{(\delta)}(1)]\hat{\ } (\xi) - [\phi^{(0)}(1)]\hat{\ } (\xi)| \leq C \delta^3,
\]
and then
\begin{align}
|[\phi^{(\delta)}(1)]\hat{\ }(\xi)| \geq c, \label{continuity}
\end{align}
for $|\xi|\leq c$ provided $\delta$ is taken small enough. We now have from $(\ref{define solution})$ that
\[
u^{(\delta,\lambda)}(\lambda^4,x)= \lambda^{-\frac{4}{\nu-1}} \phi^{(\delta)}(1,\lambda^{-1}\delta x),
\] 
and 
\[
[u^{(\delta,\lambda)}(\lambda^4)]\hat{\ }(\xi) = \lambda^{-\frac{4}{\nu-1}} (\lambda\delta^{-1})^d[\phi^{(\delta)}(1)]\hat{\ }(\lambda\delta^{-1}\xi).
\]
The estimate $(\ref{continuity})$ then yields
\[
[u^{(\delta,\lambda)}(\lambda^4)]\hat{\ }(\xi) \geq c \lambda^{-\frac{4}{\nu-1}} (\lambda\delta^{-1})^d,
\]
for $|\xi|\leq c \lambda^{-1}\delta$. \newline
\indent In the case $\gamma<-d/2$, we have from $(\ref{define epsilon ill-posedness})$ that
\[
\|u^{(\delta,\lambda)}(\lambda^4)\|_{H^\gamma} \geq c \lambda^{-\frac{4}{\nu-1}} (\lambda\delta^{-1})^d = c \ep (\lambda \delta^{-1})^{\gamma+d/2}.
\]
Here $0<\lambda\leq \delta \ll 1$, thus $(\lambda\delta^{-1})^{\gamma+d/2}\rightarrow +\infty$. We can choose $\delta$ small enough so that $\lambda \rightarrow 0$ and $(\lambda\delta^{-1})^{\gamma+d/2}\geq \ep^{-2}$ or
\[
\|u^{(\delta,\lambda)}(\lambda^4)\|_{H^\gamma} \geq \ep^{-1}.
\]
\indent In the case $\gamma=-d/2$, we have
\begin{align*}
\|u^{(\delta,\lambda)}(\lambda^4)\|_{H^{-d/2}} &\geq c \lambda^{-\frac{4}{\nu-1}} (\lambda\delta^{-1})^d \Big(\int_{|\xi|\leq c \lambda^{-1} \delta} (1+|\xi|)^{-d} d\xi\Big)^{1/2} \\
&=c \lambda^{-\frac{4}{\nu-1}} (\lambda\delta^{-1})^d (\log (c\lambda^{-1}\delta))^{1/2} \\
&=c \ep (\log (c\lambda^{-1}\delta))^{1/2}.
\end{align*}
By choosing $\delta$ small enough so that $\lambda \rightarrow 0$ and $\log (c\lambda^{-1}\delta) \geq \ep^{-4}$, we see that
\[
\|u^{(\delta,\lambda)}(\lambda^4)\|_{H^{-d/2}} \geq \ep^{-1}.
\]
Combining both cases, we see that the solution map fails to be continuous at 0 in $H^\gamma$-topology.
\paragraph{The case $\gamma=0 <\Gact$.} Let $a, a' \in [1/2,2]$. Let $\phi^{(a,\delta)}$ be the solution to $(\ref{small dispersion})$ with initial data
\[
\phi^{(a,\delta)}(0)=a \phi_0.
\]
Then, Lemma $\ref{lem small dispersion analysis}$ gives
\begin{align}
\|\phi^{(a,\delta)}(t)-\phi^{(a,0)}(t)\|_{H^k} \leq C \delta^3, \label{small dispersion estimate phi a delta}
\end{align}
for all $|t|\leq c|\log \delta|^c$, where 
\begin{align}
\phi^{(a,0)}(t,x)= a\phi_0(x) \exp (-i\mu a^{\nu-1} t|\phi_0(x)|^{\nu-1}) \label{define phi a zero}
\end{align} 
is the solution of $(\ref{small dispersion})$ with $\delta=0$ and the same initial data as $\phi^{(a,\delta)}$. Note that the constant $C, c$ above can be taken to be independent of $a$ since $a$ belongs to a compact set. We next define
\begin{align}
u^{(a,\delta,\lambda)}(t,x):= \lambda^{-\frac{4}{\nu-1}} \phi^{(a,\delta)}(\lambda^{-4}t, \lambda^{-1}\delta x). \label{define u a delta lambda}
\end{align}
Thanks to $(\ref{back transformation})$ and the scaling $(\ref{scaling})$, we see that $u^{(a,\delta, \lambda)}$ is also a solution of (NL4S). 
On the other hand, using $(\ref{define phi a zero})$, a direct computation shows that
\[
\|\phi^{(a,0)}(t)-\phi^{(a',0)}(t)\|_{L^2} \geq c >0,
\]
for some time $t$ satisfying $|a-a'|^{-1}\leq t\leq c|\log \delta|^c$ provided that $\delta$ is small enough so that $c|\log \delta|^c \geq |a-a'|^{-1}$. This estimate and $(\ref{small dispersion estimate phi a delta})$ yield
\[
\|\phi^{(a,\delta)}(t)-\phi^{(a',\delta)}(t)\|_{L^2} \geq c,
\]
for all $|a-a'|^{-1} \leq t \leq c|\log \delta|^c$. Now, let $\ep$ be as in $(\ref{define epsilon ill-posedness})$, i.e.
\begin{align}
\lambda^{-\frac{4}{\nu-1}}(\lambda\delta^{-1})^{d/2} =:\ep, \label{the choice of epsilon}
\end{align}
or $\lambda=\delta^\theta$ with $\theta=\frac{d/2}{\Gact}>1$. Moreover, using the fact 
\[
[u^{(a,\delta,\lambda)}(\lambda^4 t)]\hat{\ } (\xi) = \lambda^{-\frac{4}{\nu-1}} (\lambda\delta^{-1})^{d}[\phi^{(a,\delta)}(t)]\hat{\ }(\lambda\delta^{-1}\xi),
\]
we obtain
\[
\|u^{(a,\delta,\lambda)}(\lambda^4t)-u^{(a',\delta,\lambda)}(\lambda^4 t)\|_{L^2} = \lambda^{-\frac{4}{\nu-1}}(\lambda\delta^{-1})^{d/2} \|\phi^{(a,\delta)}(t)-\phi^{(a',\delta)}(t)\|_{L^2} \geq c \ep. 
\]
Similarly, using 
\[
[u^{(a,\delta,\lambda)}(0)]\hat{\ } (\xi)= a \lambda^{-\frac{4}{\nu-1}} (\lambda\delta^{-1})^d \hat{\phi}_0(\lambda \delta^{-1}\xi),
\]
the choice of $\ep$ in $(\ref{the choice of epsilon})$ gives
\[
\|u^{(a,\delta,\lambda)}(0)\|_{L^2}, \|u^{(a',\delta,\lambda)}(0)\|_{L^2} \leq C \ep,
\]
and
\[
\|u^{(a,\delta,\lambda)}(0)-u^{(a',\delta,\lambda)}(0)\|_{L^2} \leq C\ep|a-a'|.
\]
Since $|a-a'|$ can be arbitrarily small, this shows that for any $0<\ep, \sigma <1$ and for any $t>0$, there exist $u_1, u_2$ solutions of (NL4S) with initial data $u_1(0), u_2(0) \in \Sh$ such that
\[
\|u_1(0)\|_{L^2}, \|u_2(0)\|_{L^2} \leq C\ep, \quad \|u_1(0)-u_2(0)\|_{L^2} \leq C\sigma, \quad \|u_1(t)-u_2(t)\|_{L^2} \geq c\ep.
\]
This shows that the solution map fails to be uniformly continuous on $L^2$. This completes the proof of Thereom $\ref{theorem ill-posedness}$.
\section*{Acknowledgments}
The author would like to express his deep gratitude to Prof. Jean-Marc BOUCLET for the kind guidance, encouragement and careful reading of the manuscript. 
\addcontentsline{toc}{section}{Acknowledments}

{\sc Institut de Math\'ematiques de Toulouse, Universit\'e Toulouse III Paul Sabatier, 31062 Toulouse Cedex 9, France.} \\
\indent Email: \href{mailto:dinhvan.duong@math.univ-toulouse.fr}{dinhvan.duong@math.univ-toulouse.fr}
\end{document}